\title{The variable exponent BV-Sobolev capacity}
\author{Heikki Hakkarainen\footnote{Email: heikki.hakkarainen@oulu.fi} \and Matti Nuortio\footnote{Corresponding author. Email: mnuortio@paju.oulu.fi}}
\theoremstyle{plain}
\newtheorem{teoreema}{Theorem}[section]
\newtheorem{lemma}[teoreema]{Lemma}
\theoremstyle{definition}
\newtheorem{maaritelma}[teoreema]{Definition}
\theoremstyle{remark}
\newtheorem*{huomautus}{Remark}
\newcommand{\R}{\mathbb{R}}
\newcommand{\Rn}{\mathbb{R}^{n}}
\newcommand{\Ly}{L^{1}}
\newcommand{\Lp}{L^{p(\cdot)}}
\newcommand{\W}{W^{1,p(\cdot)}}
\newcommand{\BV}{\mathrm{BV}}
\newcommand{\BVp}{\mathrm{BV}^{p(\cdot)}}
\newcommand{\BVpp}{\widetilde{\mathrm{BV}}\vphantom{\mathrm{BV}}^{p(\cdot)}}
\newcommand{\Lip}{\mathrm{Lip}_{\text{loc}}}
\newcommand{\ep}{\varepsilon}
\renewcommand{\epsilon}{\varepsilon}
\renewcommand{\emptyset}{\varnothing}
\newcommand{\modup}{\varrho_{p(\cdot)}}
\newcommand{\moduyp}{\varrho_{1,p(\cdot)}}
\newcommand{\modubvp}{\varrho_{\BVp}}
\newcommand{\modubvpomega}{\varrho_{\BVp (\Omega)}}
\newcommand{\modubvpp}{\varrho_{\BVpp}}
\newcommand{\norm}[1]{\lVert#1\rVert}
\newcommand{\kapp}{C_{p(\cdot)}}
\newcommand{\kapbvp}{C_{\BVpp}}
\newcommand{\adm}{\mathcal{A}_{\BVpp}}
\DeclareMathOperator*{\esssup}{\text{ess sup}}
\DeclareMathOperator*{\essinf}{\text{ess inf}}
\begin{document}

\maketitle

\begin{abstract}
In this article we study basic properties of the mixed BV-Sobolev capacity
with variable exponent $p$. We give an alternative way to define mixed type
BV-Sobolev-space which was originally introduced
by Harjulehto, H{\" a}st{\" o}, and Latvala. Our definition is based on relaxing the $p$-energy
functional with respect to the Lebesgue space topology. We prove that this
procedure produces a Banach space that coincides with the space defined by
Harjulehto et al.~for bounded domain $\Omega$ and
log-H{\" o}lder continuous exponent $p$. Then we show that this
induces a type of variable exponent BV-capacity and that this is a Choquet
capacity with many usual properties. Finally, we prove that
this capacity has the same null sets as the variable exponent Sobolev
capacity when $p$ is log-H{\" o}lder continuous.
\end{abstract}

\textbf{Keywords:} capacity, functions of bounded variation, Sobolev spaces, variable exponent

\textbf{Mathematics subject classification 2000:} 46E35, 26A45, 28A12


\section{Introduction}
Variable exponent analysis has become a growing field of interest during
the past 10--20 years. Variable exponent problems originated with the study
of variable exponent function spaces and certain variational problems
\cite{KovR91,Sar79,Tse61,Zhi87}.
Then the study spread out to e.g.~harmonic analysis, geometric analysis,
and fuller theory of partial differential equations.

As an introduction to the subject of variable exponent problems,
the reader is advised to the original article \cite{KovR91},
the forthcoming monograph \cite{DieHHR_pp10}, and the survey
articles \cite{DieHN04,HarHLN10,Sam09}.

In this article, we study a class of ''bounded-variation-like'' functions
and a capacity in the variable exponent setting. In general, the study of
capacity is closely related to the potential theory, say the Wiener regularity
of boundary points with respect to a variational problem. For such study
of capacity in the variable exponent case, see \cite{AlkK04}. As another example
of related variable exponent potential theory, we mention \cite{HarL08}.
Capacity is also the correct way of measuring the fine regularity properties
of Sobolev functions. For the variable exponent case see \cite[Section 5]{HarHKV03}
and also \cite{HarH04,HarHL04,HarKT07}.

Let $p$ be a finite variable exponent. The $p(\cdot)$-capacity of set $E\subset\Rn$ is defined as
\[C_{p(\cdot)}(E)=\inf\int_{\Rn} |u|^{p(x)}+|\nabla u|^{p(x)}dx,\]
where the infimum is taken over admissible functions $u\in S_{p(\cdot)}(E)$ where
    \[
        S_{p(\cdot)}(E) =
        \left\{ u\in W^{1,p(\cdot)}(\Rn) \; : \; u \geq 1 \text{ in an open set containing } E \right\} .
    \]
It is easy to see that if we restrict these admissible functions $S_{p(\cdot)}$
to the case $0 \leq u \leq 1$, we get the same capacity. In this case it is
obviously possible to also drop the absolute value from $|u|^{p(x)}$.

The $p(\cdot)$-capacity enjoys the usual desired properties of capacity
when $p^{-}>1$, see \cite{HarHKV03}. However, just as in the constant exponent
setting, some of these properties require different argument when $p^{-}=1$.
In the classical treatment of $1$-capacity, see \cite{FedZ72}, properties
such as limit property of capacity with respect to increasing sequence of sets are
first proved for BV-capacity. Then the corresponding result is obtained
for $1$-capacity by proving that these two capacities are in fact equal.
This BV-approach has been used to study questions related to $1$-capacity
for example in \cite{FedZ72} in the Euclidean setting and
in \cite{HakK10,KinKST08} in the setting of metric measure spaces. We
note that BV-capacity has also been studied without comparison to
the $1$-capacity, for general reference see \cite[Chapter 5.11]{Zie89}
in the case of BV-capacity and \cite[Chapter 4.7]{EvaG92} in the case
of $1$-capacity.

When studying the $1$-capacity, one encounters some difficulties. It has turned
out that the function space $W^{1,1}(\Omega)$ does not offer the best possible
framework for studying variational problems; instead, several difficulties arise.
On the other hand, the space of BV functions provides a better setting, bypassing
some of these difficulties. Aware of the obstacles in the constant
exponent case,
we choose this BV-based approach as our starting point. We give an alternative
definition for the mixed BV-Sobolev space of functions presented
in \cite{HarHL08}. The original definition is based on directly separating
the modular into a ''Sobolev part'' and a ''BV part'', defined on separate
parts of the domain, whereas our definition is based on relaxing
the $p(\cdot)$-energy functional over whole of the domain. The rough idea is
that the functions should behave like Sobolev functions when $p(\cdot)>1$ and
like BV-functions when $p(\cdot)=1$. For general introduction into the
procedure of relaxation of a functional,
see \cite[Chapter 1.3 and Example 1.4.2]{But89}.
See also \cite[Examples 3.13, 3.14]{Dal93}.

We obtain several properties for our mixed BV-Sobolev space of functions. We show that it has a naturally induced modular, that it is a Banach space, that the modular has an important semicontinuity property, and that the space has certain well-behaved closure properties. We also show that our definition of the class of functions coincides with \cite{HarHL08} under certain assumptions. The new definition should also work well for unbounded domains.

We conclude by defining a capacity based on the mixed BV-Sobolev space. We show that this capacity has many properties we would usually expect from a capacity related to potential theory: it is continuous with respect to an increasing sequence of sets, it defines an outer measure, and so forth. As a result, this capacity is a Choquet capacity. We finally show an equivalence between the mixed capacity and the Sobolev capacity with respect to null sets.

We note that our approach has some advantages over \cite{HarHKV03,HarHL08}.
Our mixed modular has a lower semicontinuity property which is mainly
due to the definition via relaxation. It is known that lower semicontinuity
of the modular can be used to prove many properties
of the capacity, confer \cite{FedZ72,HakK10} and \cite[Chapter 5.12]{Zie89}.
The lower semicontinuity is not known in the case of \cite{HarHKV03} and thus
properties of a similarly induced capacity remain unknown to us.

Also, in the paper \cite{HarHL08} the authors study the case of bounded
$\Omega$ and strongly log-H{\" o}lder continuous $p$. We will be able to
establish an equivalence between two definitions even after relaxing the
condition of strong log-H{\" o}lder continuity to regular log-H{\" o}lder
continuity. However, it seems to us that in \cite{HarHL08}, it is required
that $\Omega$ has finite measure. In contrast, our definition of a
mixed space does not depend on this assumption.

Finally, in \cite{HarHKV03} the Choquet property
of the variable exponent Sobolev capacity is established in the case
$p^{-} > 1$. It is not known whether it is true for $p^{-} = 1$. In
contrast, the proofs for our mixed capacity do not distinguish between
the cases $p^{-} > 1$ and $p^{-} = 1$. Our mixed capacity is a Choquet
capacity with the same null sets as the Sobolev capacity.

\section{Preliminaries}
Let $\Omega\subset\Rn$ be an open set. A measurable function $p:\Omega\to\left[1,\infty\right)$ is called a \emph{variable exponent}. Note that we may later on impose additional restrictions on the variable exponent. We denote
    \[
        p^{+} := \esssup\limits_{x\in \Omega} p(x) , \quad p^{-}=\essinf\limits_{x\in \Omega}p(x) ,
    \]
and for $E \subset \Rn$,
    \[
        p^{+}_{E} := \esssup\limits_{x\in E} p(x),\quad p^{-}_{E}= \essinf\limits_{x\in E}p(x).
    \]
The set of points where $p$ attains value $1$ will be important,
so we reserve special notation for it. Following \cite{HarHL08},
we denote
    \[
        Y := \left\{ x \in \Omega \; : \; p(x)=1 \right\} \, .
    \]

In this paper, we always assume that $p^{+}<\infty$. This assumption is typical, since it ensures that the notion of \emph{convergence in modular} is equivalent to the typical convergence in norm; we shall use this fact later on. See \cite[(2.28) on p.~598]{KovR91}. Also, we note that the concept of $\infty$-capacity is, in general, not very useful, so it is reasonable to restrict the consideration to the strictly finite case.

We define a \emph{modular} by setting
\[ \modup (u)=\int_{\Omega}|u(x)|^{p(x)} \, dx. \]
The  \emph{variable exponent Lebesgue space} $L^{p(\cdot)}(\Omega)$ consists of all measurable functions $u:\Omega\to\R$ for which the modular
$\modup(u/\lambda)$ is finite for some $\lambda>0$. We define a norm on this space as a Luxemburg norm:
\[\|u\|_{L^{p(\cdot)}(\Omega)}=\inf\left\{\lambda>0:\modup(u/\lambda)\leq 1\right\}.\] 
It is known that $\Lp(\Omega)$ is a Banach space. The variable exponent Lebesgue space is a special case of a \emph{Musielak--Orlicz space}, but here we only consider the Lebesgue and Sobolev type spaces. For constant function $p$ the
variable exponent Lebesgue space coincides with the standard Lebesgue space.

The \emph{variable exponent Sobolev space} $\W(\Omega)$ consists of functions $u\in\Lp(\Omega)$ whose distributional gradient $\nabla u$ has modulus in
$\Lp(\Omega)$. The variable exponent Sobolev space $\W(\Omega)$ is a Banach space with the norm
\[\|u\|_{1,p(\cdot)}=\|u\|_{p(\cdot)}+\|\nabla u\|_{p(\cdot)}.\]
We also define
\[\moduyp(u)=\modup(u)+\modup(\nabla u).\]

We recall the definition of log-H{\" o}lder continuity.

\begin{maaritelma}Function $p:\Omega\to\R$ is locally log-H{\" o}lder continuous on $\Omega$ if there exists $c_{1}>0$ such that
\[|p(x)-p(y)|\leq\frac{c_{1}}{\text{log}\left(e+\frac{1}{|x-y|}\right)}\]
for all $x,y\in\Omega$. We say that $p$ is globally log-H{\" o}lder continuous on $\Omega$ if it is locally log-H{\" o}lder continuous on $\Omega$
and there exists $p_{\infty} \geq 1$ and a constant $c_{2}>0$ such that
\[|p(x)-p_{\infty}|\leq\frac{c_{2}}{\text{log}(e+|x|)}\]
for all $x\in\Omega$. The constant $\max\{c_{1},c_{2}\} =: c$ is called the log-H{\" o}lder constant of $p$.
\end{maaritelma}

\begin{huomautus}
    We usually replace the constants $c_1, c_2$ by the maximum $c$. This
    is due to the fact that we may extend a log-H{\" o}lder continuous
    function to a larger domain, but in such procedure one of the constants
    may become larger. However, the maximum $c$ remains in extension.
\end{huomautus}

\begin{huomautus}
    In what follows, we usually only speak about log-H{\" o}lder continuity.
    The meaning will be clear from the context.
    In a bounded set, we mean by this local log-H{\" o}lder continuity. In
    an unbounded set, we mean by this global log-H{\" o}lder continuity.
\end{huomautus}

The assumption of log-H{\" o}lder continuity is typical in the variable exponent setting. It ensures the following important estimate:
    \[
        R^{-(p^{+}_B - p^{-}_B)} \leq C .
    \]
This for a ball $B$ of radius $R > 0$ and a uniform constant on the right hand side. We shall explicitly make use of this estimate. In general, this estimate has some important consequences, such as the density of smooth functions and that convolution-based mollifiers are available as smoothing operators. For a discussion, see the introduction to \cite{Ler05}. Assumption of log-H{\" o}lder continuity is also crucial in the regularity theory of partial differential equations with variable exponent \cite{Zhi97}.

In \cite{HarHL08}, the authors introduce a mixed BV-Sobolev-type space of functions. One of their main results is concerned with the problem of energy minimization. The authors use a slightly stronger condition for the exponent, the \emph{strong log-H{\" o}lder continuity}.
\begin{maaritelma}Exponent $p$ satisfies the strong log-H{\" o}lder continuity condition if $p$ is log-H{\" o}lder continuous in $\Omega$ and
\[\lim\limits_{x\to y}|p(x)-1|\text{log}\frac{1}{|x-y|}=0\]
for every $y \in Y$.
\end{maaritelma}
This condition is necessary for some results in the theory of minimizers and of partial differential equations. Earlier on, it was used by Acerbi and Mingione in e.g.~\cite{AceM01}.
In \cite{HarHL08}, the effect of strong log-H{\" o}lder continuity is as follows. The authors use the mollifiers as smoothing operation and show that their definition of a mixed pseudo-modular is upper semicontinuous with respect to these mollifiers and with respect to a closed subset. We repeat this as Theorem \ref{HarHL08_teor46} later on. We will also relax this result in Theorem \ref{variant_of_HarHL08_46} at the cost of a multiplicative constant.

\section{The mixed BV-Sobolev space: known results}
In order to define the mixed BV-Sobolev space, we first recall the
ordinary BV-space, i.e.~functions of bounded variation.

\begin{maaritelma}
    Denote
    \[
        \|Du\|(\Omega) :=
        \sup \left\{ \int_{\Omega} u \, \text{div} \, \varphi \, dx \; : \; \varphi\in C_{0}^{1}(\Omega;\Rn) , \, |\varphi| \leq 1 \right\} .
    \]
    A function $u\in\Ly(\Omega)$ has \emph{bounded variation}
    in $\Omega$, denote $u\in\BV(\Omega)$, if $\|Du\| (\Omega) < \infty$.    
    We denote $u\in\BV_{\text{loc}}(\Omega)$, if $u\in\BV(U)$
    for every open set $U\subset\subset\Omega$.
\end{maaritelma}

If $u\in\BV(\Omega)$, then the distributional gradient $Du$ is a vector
valued signed Radon measure and $\|Du\|(\cdot)$ is the total variation measure
associated with $Du$. A set $E\subset\Rn$ has \emph{finite perimeter}
in $\Omega$, if $\chi_{E}\in\BV(\Omega)$. The perimeter of $E$
in $\Omega$ is defined as 
    \[
        P(E,\Omega)=\|D\chi_{E}\|(\Omega).
    \]
For the properties of $\BV$-functions, e.g.~the lower semicontinuity of
total variation measure, approximation by smooth functions, and the coarea
formula, we refer to \cite[Chapter 5]{EvaG92}, \cite[Chapter 5]{Zie89},
and \cite[Chapter 1]{Giu84}.

We now present the definition of the mixed BV-Sobolev space introduced in \cite{HarHL08}. Let $\Omega$ be open and bounded and $E \subset \Omega$ Borel.

\begin{maaritelma} \label{HarHL08_maar42}
    Define the mixed-type pseudo-modular
    \[
        \varrho_{\BVp(E)}(u) := \norm{Du}(E \cap Y) + \varrho_{\Lp(E \setminus Y)}(\nabla u) .
    \]
    Define the mixed-type norm
    \[
        \norm{u}_{\BVp(\Omega)}
        := \norm{u}_{\Lp(\Omega)}
           + \inf \left\{ \lambda > 0 \; : \; \modubvpomega(u/\lambda) \leq 1 \right\} .
    \]
    Define the space $\BVp(\Omega)$ to consist of all measurable functions \linebreak
    $u : \Omega \to \R$ with $\norm{u}_{\BVp(\Omega)} < \infty$. Define also
    $u \in \BVp_{\text{loc}}(\Omega)$, if $u \in \BVp(U)$ for every open
    $U \subset \subset \Omega$.
\end{maaritelma}

By \cite[Proposition 4.3]{HarHL08}, the space $\BVp(\Omega)$ is a Banach space.

We denote the standard mollification $\varphi_\delta \ast u =: u_\delta$. The following result in \cite{HarHL08} links approximation and upper semicontinuity of the BV-Sobolev pseudo-modular.

\begin{teoreema}[Theorem 4.6 in \cite{HarHL08}] \label{HarHL08_teor46}
    Let $\Omega \subset \Rn$ be bounded and let $p$ be a bounded, strongly log-H{\" o}lder continuous variable exponent in $\Omega$. If $u \in \BVp(\Omega)$ and $F \subset \Omega$ is closed, then
    \[
        \limsup\limits_{\delta \to 0} \varrho_{\BVp(F)}(u_\delta)
        \leq \varrho_{\BVp(F)}(u) .
    \] 
\end{teoreema}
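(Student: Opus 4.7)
The plan is to split the pseudo-modular according to its two components, handle the BV part on $F \cap Y$ by standard Radon-measure arguments, and reserve the main effort for the Sobolev part on $F \setminus Y$, where strong log-H\"older continuity enters.

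For the BV part, observe that a log-H\"older continuous $p$ is continuous, so $Y = p^{-1}\{1\}$ is closed and hence $F \cap Y$ is closed. The pointwise mollifier bound $|\nabla u_\delta(x)| \leq (\varphi_\delta \ast |Du|)(x)$ together with Fubini gives
\[
    \|Du_\delta\|(F \cap Y) = \int_{F \cap Y} |\nabla u_\delta(x)| \, dx \leq |Du|\bigl((F \cap Y) + B(0,\delta)\bigr) .
\]
The sets on the right form a decreasing family whose intersection equals $F \cap Y$, so continuity of the Radon measure $|Du|$ from above yields $\limsup_{\delta \to 0} \|Du_\delta\|(F \cap Y) \leq |Du|(F \cap Y)$.

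For the Sobolev part $\int_{F \setminus Y} |\nabla u_\delta|^{p(x)} \, dx$, I would decompose the domain using a transitional layer. Set $A_\epsilon := F \cap \{p \geq 1+\epsilon\}$ and $B_\epsilon := F \cap \{1 < p < 1+\epsilon\}$. On $A_\epsilon$ the exponent is uniformly bounded away from $1$; since $\int_{\Omega \setminus Y}|\nabla u|^{p(x)}\, dx < \infty$, $|Du|$ is absolutely continuous on $A_\epsilon$ with density $\nabla u \in \Lp$, and $u$ is locally Sobolev there. Log-H\"older continuity gives the standard boundedness of convolution in $\Lp$, and a Fatou-type argument combined with the continuity-from-above of $|Du|$ across $A_\epsilon + B(0,\delta)$ produces
\[
    \limsup_{\delta \to 0} \int_{A_\epsilon} |\nabla u_\delta|^{p(x)} \, dx \leq \int_{A_\epsilon} |\nabla u|^{p(x)} \, dx \leq \int_{F \setminus Y} |\nabla u|^{p(x)} \, dx .
\]

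The hard part is the thin layer $B_\epsilon$, which is where strong log-H\"older continuity is indispensable. The crude bound $|\nabla u_\delta(x)| \leq C \delta^{-n} |Du|(B(x,\delta))$ raised to $p(x)$ produces a factor $\delta^{-n(p(x)-1)} = \exp\bigl(n(p(x)-1)\log(1/\delta)\bigr)$; strong log-H\"older continuity is precisely what makes $(p(x)-1)\log(1/\delta)$ stay small when $\delta$ is coordinated with $\mathrm{dist}(x, Y)$. This reduces the contribution on $B_\epsilon$ to $C \int_{B_\epsilon} |\nabla u_\delta|\, dx \leq C\, |Du|\bigl(B_\epsilon + B(0,\delta)\bigr)$, which one shows vanishes by first sending $\delta \to 0$ and then $\epsilon \to 0$, using that $\bigcap_\epsilon \overline{B_\epsilon} \subset Y$ and that the $|Du|$-mass accumulated on the boundary can be absorbed into the already-treated $F \cap Y$ term. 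The main obstacle is this scale-matching on $B_\epsilon$: one must pick $\delta$ depending on $\mathrm{dist}(\cdot, Y)$ so that the strong log-H\"older quantity is fully exploited, and one must take care that no $|Du|$-mass is double-counted between the BV part and the transitional contribution. Summing the three estimates yields the desired bound $\limsup_{\delta \to 0}\varrho_{\BVp(F)}(u_\delta) \leq \|Du\|(F\cap Y) + \int_{F\setminus Y} |\nabla u|^{p(x)}\, dx = \varrho_{\BVp(F)}(u)$.
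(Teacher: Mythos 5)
Your toolkit is the right one (the mollifier bound $|\nabla u_\delta|\leq\varphi_\delta\ast|Du|$, continuity from above of the variation measure on shrinking $\delta$-neighbourhoods, and strong log-H\"older continuity to force $\delta^{-n(p(x)-1)}\leq 1+\ep'$ near $Y$), and your treatment of $F\cap Y$ and of $A_\ep$ is essentially sound. The gap is in the transitional layer $B_\ep$, exactly where you flag the danger of double counting but do not resolve it. Your estimate there is
\[
  \int_{B_\ep}|\nabla u_\delta|^{p(x)}\,dx\;\leq\;(1+\ep')\,\|Du\|\bigl(B_\ep+B(0,\delta)\bigr),
\]
and the iterated limit ($\delta\to0$ first, then $\ep\to0$) of the right-hand side is $(1+\ep')\,\|Du\|\bigl(\bigcap_\ep\overline{B_\ep}\bigr)$, which is \emph{not} zero in general: $\overline{B_\ep}$ picks up precisely the part of $F\cap Y$ approached by points where $p>1$, and the singular part of $Du$ may charge that set (take $u$ a characteristic function whose reduced boundary meets $\partial Y$). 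That mass cannot be ``absorbed into the already-treated $F\cap Y$ term'', because that term has already spent the full budget $\|Du\|(F\cap Y)$. As written, your three estimates sum in the worst case to roughly $2\|Du\|(F\cap Y)+\varrho_{\Lp(F\setminus Y)}(\nabla u)$, which is weaker than the claim.

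The repair is to not split off $F\cap Y$ at all. Since $u_\delta$ is smooth and $p\equiv1$ on $Y$, the whole pseudo-modular collapses to a single integral, $\varrho_{\BVp(F)}(u_\delta)=\int_F|\nabla u_\delta|^{p(x)}\,dx$; decompose $F$ into $F\cap V$ and $F\setminus V$, where $V$ is a small open neighbourhood of the compact set $F\cap Y$. On $F\cap V$ the strong log-H\"older estimate gives $\int_{F\cap V}|\nabla u_\delta|^{p(x)}\,dx\leq(1+\ep')\,\|Du\|(V+B(0,\delta))$ in one stroke; letting $\delta\to0$ and then shrinking $V$ so that $\overline V$ decreases to $F\cap Y$ yields exactly $(1+\ep')\,\|Du\|(F\cap Y)$ with no mass counted twice, while $F\setminus V$ is handled as you handle $A_\ep$. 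This single-neighbourhood structure is also what the source argument uses: the proof sketch of Theorem \ref{variant_of_HarHL08_46} above records that in \cite{HarHL08} the estimate $|z-y|^{-n(p(z)-1)}<1+\ep$ with $y\in Y$ is applied on a neighbourhood of $Y$ as a whole, rather than after first isolating $F\cap Y$.
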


We note that if the proof of \cite[Theorem 4.6]{HarHL08} is examined
carefully, we may also state the following.

\begin{teoreema} \label{variant_of_HarHL08_46}
    Let $\Omega \subset \Rn$ be bounded and let $p$ be a bounded,
    log-H{\" o}lder continuous variable exponent in $\Omega$.
    If $u \in \BVp(\Omega)$ and $F \subset \Omega$ is closed, then
    \[
        \limsup\limits_{\delta \to 0} \varrho_{\BVp(F)}(u_\delta)
        \leq \; C \varrho_{\BVp(F)}(u)
    \]
    with $1 \leq C < \infty$.
\end{teoreema}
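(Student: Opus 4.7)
The plan is to revisit the proof of Theorem \ref{HarHL08_teor46} in \cite{HarHL08} and track precisely where strong log-H\"older continuity is invoked; I will then substitute the weaker, uniform bound
\[
R^{-(p^{+}_B - p^{-}_B)} \leq C
\]
afforded by ordinary log-H\"older continuity on balls $B \subset \Omega$ of radius $R$, which is exactly the source of the multiplicative constant in the claim.

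First, I would split
\[
\varrho_{\BVp(F)}(u_\delta) = \norm{Du_\delta}(F \cap Y) + \varrho_{\Lp(F \setminus Y)}(\nabla u_\delta)
\]
and treat the two summands separately. On the BV-piece, using that $u_\delta$ is smooth, $\norm{Du_\delta}(F \cap Y) = \int_{F \cap Y} |\nabla u_\delta|\,dx$. A Fubini argument on the convolution yields $\int_{F \cap Y} |\nabla u_\delta|\,dx \leq \norm{Du}((F \cap Y)^\delta)$, where $(F \cap Y)^\delta$ denotes the open $\delta$-neighbourhood. Since $F \cap Y$ is closed in $\Omega$ (and for $\delta$ small enough $(F \cap Y)^\delta \subset\subset \Omega$), outer regularity of the Radon measure $\norm{Du}$ gives $\limsup_{\delta \to 0}\norm{Du}((F \cap Y)^\delta) \leq \norm{Du}(F \cap Y)$. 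This half therefore satisfies the claim already with constant $1$.

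For the Sobolev-piece I would follow the strategy of \cite[Theorem 4.6]{HarHL08}: apply Jensen's inequality with exponent $p(x)$ to $|\nabla u_\delta(x)|^{p(x)}$, then bound the factor $|\nabla u(y)|^{p(x) - p(y)}$ arising after interchanging $p(x)$ and $p(y)$ by splitting into the regions $\{|\nabla u| \leq 1\}$ and $\{|\nabla u| > 1\}$ and converting the exponent difference into a power of the form $\delta^{-(p^{+}_B - p^{-}_B)}$ over the relevant ball $B$ of radius comparable to $\delta$. Under strong log-H\"older this quantity tends to $1$ as $\delta \to 0$, which is what yields constant $1$ in \cite{HarHL08}; under merely log-H\"older continuity the very same quantity is bounded, but only by a uniform constant $C$ depending on the log-H\"older constant of $p$, on $p^+$, and on $n$. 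Integrating over $F \setminus Y$, applying Fubini, and then sending $\delta \to 0$ produces the claim with this $C$.

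The main obstacle is the interface between the two regions: a point $x \in F \setminus Y$ close to $Y$ can, upon mollification, absorb mass from the BV-part of $u$, so the Sobolev integral on $F \setminus Y$ may grow due to BV-mass of $u$ near $Y$. My plan here is to observe that any such pulled-in contribution is dominated by $\norm{Du}((F \cap Y)^\delta)$, which in the limit is controlled by $\norm{Du}(F \cap Y)$ and thus absorbed into the right-hand side $C\,\varrho_{\BVp(F)}(u)$. It is precisely this mixing between the two regions that prevents one from obtaining the sharp constant $C = 1$ without the strong log-H\"older hypothesis, and conversely allows us to replace that hypothesis by ordinary log-H\"older at the price of a harmless multiplicative factor.
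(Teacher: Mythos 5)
Your proposal is correct and takes essentially the same route as the paper: the paper's own proof likewise consists of revisiting the argument of \cite[Theorem 4.6]{HarHL08} and identifying the single point (between estimates (4.4) and (4.5) there) where strong log-H{\"o}lder continuity is used to force $|z-y|^{-n(p(z)-1)} < 1+\ep$ for $y \in Y$, replacing it by the uniform bound $|z-y|^{-n(p(z)-1)} < e^{C}+\ep$ that ordinary log-H{\"o}lder continuity provides. The constant $e^{C}$, depending only on the log-H{\"o}lder constant of $p$, then carries through the remainder of the argument exactly as your $\delta^{-(p^{+}_B-p^{-}_B)} \leq C$ bound does.
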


\begin{huomautus}
    Note that we now relaxed the condition of strong log-H{\" o}lder
    continuity to log-H{\" o}lder continuity. The price we have to
    pay is the appearance of constant $C$. This constant will depend
    only on the log-H{\" o}lder constant of $p$.
\end{huomautus}

\begin{proof}
    We only shortly comment on the difference in the proof.
    In the proof of \cite[Theorem 4.6]{HarHL08} between estimates
    (4.4) and (4.5), points are chosen in such a way that
    \[
        | z - y |^{ -n (p(z)-1) } < 1 + \ep
    \]
    with $y \in Y$. If strong log-H{\" o}lder continuity is relaxed
    to log-H{\" o}lder continuity, we may do the same procedures,
    but instead with
    \[
        | z - y |^{ -n (p(z)-1) } < e^C + \ep \, .
    \]
    Here $C$ depends on the log-H{\" o}lder constant of $p$. This will
    carry over to the remainder of the proof, so eventually we
    shall have
    \[
        \limsup\limits_{\delta \to 0} \varrho_{\BVp(F)}(u_\delta)
        \leq \; e^C \varrho_{\BVp(F)}(u) \, . \qedhere
    \]
\end{proof}

In \cite{HarHL08}, the authors continue to study the solutions of certain
partial differential equations. Their main result is presented as
\cite[Theorem 7.1]{HarHL08}. Roughly speaking, let us have a sequence
of variable exponent $p(\cdot)$-Laplace equations with exponents that
are bounded away from 1 and which converge to a strongly log-H{\" o}lder
continuous exponent $p$ which attains also the value 1. Then the solutions
to these equations tend to a function $u \in \BVp(\Omega)$ which is also
a solution in $\Omega \setminus Y$ and minimizes the mixed energy
$\modubvp(\cdot)$ in compact subsets of $\Omega$.

Next, we are to present a different definition for a mixed BV-Sobolev
space, which we show to be equivalent under certain assumptions. Note
that instead of studying partial differential equations, we then continue
to study capacities. This can be seen as a motivation for the new definition.
At least for us, the new definition made studying the capacity much more natural.

\section{The mixed $\BV$-Sobolev space: alternative definition}
We will now give an alternative definition for the mixed $\BV$-Sobolev space. The assumption $p^{+}<\infty$ guarantees that $\Lip(\Omega)$ is dense in $\Lp(\Omega)$, see \cite[Theorem 2.11]{KovR91}, and therefore we take the following approach based on relaxing the functional
\[u \mapsto \int_{\Omega}|\nabla u|^{p(x)}dx.\]
The density result is actually true for smooth functions as well, but we restrict our consideration to relaxation with respect to sequences of locally Lipschitz functions. As a class, Lipschitz functions have better closedness properties; especially the cases where we consider pointwise maxima and minima will be important.

\begin{maaritelma} \label{maar_BVpp_pseudomod}
    Define the pseudo-modular
    \[
        \modubvpp(u) :=
        \inf\left\{\liminf\limits_{i\to\infty}\int_{\Omega}|\nabla u_{i}(x)|^{p(x)}dx\right\},
    \]
    where the infimum is taken over all sequences $(u_{i})_{i=1}^{\infty}$
    in $\Lip(\Omega)\cap \Lp(\Omega)$ such that $u_{i}\to u$ in $\Lp(\Omega)$.
    If the basic set is some $E$ other than $\Omega$, we may emphasize this
    by writing $\varrho_{\BVpp(E)}(u)$. Define the space
    \[
        \BVpp(\Omega) := \left\{ u \in \Lp(\Omega)
                                 \; : \; \modubvpp(u) < \infty \right\} .
    \]
\end{maaritelma}

%
%

\begin{huomautus}
    For constant function $p>1$ this definition gives the ordinary Sobolev
    space $W^{1,p}(\Omega)$. Also, if $p \equiv 1$ we obtain the functions
    of bounded variation. In fact, it can be seen that
    \[
    \begin{cases} \displaystyle
        \modubvpp(u) =\int_{\Omega}|\nabla u|^{p}dx
        & \text{ for } p(\cdot) \equiv p , \; 1 < p < \infty , \\
        \modubvpp(u)=\|Du\|(\Omega)
        & \text{ for } p(\cdot) \equiv 1 .
    \end{cases}
    \]
    For these kinds of results, we provide as references
    \cite[Examples 3.13, 3.14]{Dal93} and \cite[Theorem 3.9]{AmbFP06}.
    Note that these are not the original results but rather good
    overall references.
\end{huomautus}

As a matter of definition, some elementary calculations and standard
techniques from the theory of modular spaces, we have the following.

\begin{lemma} \label{lemma_pseudomod_ominaisuudet}
    The pseudo-modular $\modubvpp$ is convex. It is continuous and decreasing
    as a mapping
    \[
        \lambda \mapsto \modubvpp\left( \frac{u}{\lambda} \right)
    \]
    for $u \in \BVpp(\Omega)$ and $\lambda > 0$.
\end{lemma}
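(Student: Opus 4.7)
The plan is to verify the three assertions in turn, each resting on the pointwise convexity of $t \mapsto t^{p(x)}$ on $[0,\infty)$, which holds because $p(x) \geq 1$.

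For \textbf{convexity}, given $u,v \in \BVpp(\Omega)$ and $\theta \in [0,1]$, for any $\ep > 0$ I take sequences $(u_i), (v_i) \subset \Lip(\Omega) \cap \Lp(\Omega)$ converging to $u$ and $v$ in $\Lp(\Omega)$ that almost minimize the defining infimum, i.e.
\[
    \liminf_{i \to \infty} \int_\Omega |\nabla u_i|^{p(x)} \, dx \leq \modubvpp(u) + \ep,
\]
and analogously for $v$. Extracting a common subsequence makes both energies actually converge as limits. The sequence $w_i := \theta u_i + (1-\theta) v_i$ lies in $\Lip(\Omega) \cap \Lp(\Omega)$, tends to $\theta u + (1-\theta) v$ in $\Lp(\Omega)$, and satisfies $|\nabla w_i|^{p(x)} \leq \theta |\nabla u_i|^{p(x)} + (1-\theta)|\nabla v_i|^{p(x)}$ by the pointwise convexity. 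Integrating, passing to the limit, and sending $\ep \to 0$ yields the convex inequality.

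For \textbf{monotonicity}, let $0 < \lambda_1 < \lambda_2$. Starting from any competitor $(u_i)$ in the infimum for $u/\lambda_1$, the rescaled sequence $v_i := (\lambda_1/\lambda_2) u_i$ belongs to $\Lip(\Omega) \cap \Lp(\Omega)$, converges to $u/\lambda_2$ in $\Lp(\Omega)$, and satisfies
\[
    \int_\Omega |\nabla v_i|^{p(x)} \, dx = \int_\Omega (\lambda_1/\lambda_2)^{p(x)} |\nabla u_i|^{p(x)} \, dx \leq \int_\Omega |\nabla u_i|^{p(x)} \, dx,
\]
because $\lambda_1/\lambda_2 < 1$ and $p(x) \geq 1$. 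Taking $\liminf$s and then infima gives $\modubvpp(u/\lambda_2) \leq \modubvpp(u/\lambda_1)$.

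For \textbf{continuity}, I pass to the auxiliary function $f(s) := \modubvpp(su)$, so that $\modubvpp(u/\lambda) = f(1/\lambda)$. Convexity of $\modubvpp$ (first part) immediately gives convexity of $f$ on $[0, \infty)$. To check that $f$ is finite on $(0, \infty)$, I scale any almost-minimizing sequence $(u_i)$ for $u$ by $s$ and use $s^{p(x)} \leq \max(1, s^{p^+})$ to obtain $f(s) \leq \max(1, s^{p^+})\, \modubvpp(u) < \infty$. A real-valued convex function on an open interval is continuous there, so $f$ is continuous on $(0, \infty)$, and hence so is $\lambda \mapsto f(1/\lambda)$. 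The only genuine (though mild) obstacle throughout is bookkeeping around the $\liminf$: one must legitimately combine estimates from two approximating sequences, which is handled by a subsequence extraction converting each $\liminf$ to a true limit.
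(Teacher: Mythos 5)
Your proof is correct, and since the paper gives no proof of this lemma (it is dispatched as ``elementary calculations and standard techniques''), your argument is exactly the standard one intended; it is also consistent with how the paper handles the same issues in Lemma \ref{hilalemma} and Theorem \ref{puolijatku}. One small repair in the convexity step: the approximating sequences for $u$ and for $v$ are independent competitors, so rather than extracting a ``common subsequence'' of a single index set (which can raise one of the two $\liminf$s --- think of two sequences alternating $0,10,0,10,\dots$ out of phase), you should either re-index the two sequences separately so that each energy converges to a limit at most $\modubvpp(u)+\ep$, resp.\ $\modubvpp(v)+\ep$, and then pair them, or invoke, via a diagonal argument, genuine minimizing sequences along which the energies converge to the respective infima, as the paper tacitly does in Lemma \ref{hilalemma}. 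With that one-line adjustment everything goes through, including the monotonicity via the scaling $s^{p(x)}\leq\max(1,s^{p^{+}})$ and the continuity via finiteness plus convexity of $s\mapsto\modubvpp(su)$ on $(0,\infty)$.
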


\begin{huomautus}
    Actually, the pseudo-modular has all the properties of a continuous
    convex modular, except for the fact that
    $\modubvpp(u) = 0$ does not imply $u = 0$.
\end{huomautus}

The basic idea of Definition \ref{maar_BVpp_pseudomod} is to allow ''BV-like'' behaviour of the
functions in $Y$. It might not seem natural at first, but this gives a better
approach towards tools such as capacity than working with the Sobolev space
$\W(\Omega)$. It is also noteworthy that usually scaling with a constant $\lambda$
does not behave well in the world of variable exponent modulars. However, it can
be shown that if $\modubvpp(u)$ has a minimizing sequence $u_i$, then $\lambda u_i$
is a minimizing sequence for $\modubvpp(\lambda u)$.

We next move to defining a norm in our space. Let us define
    \[
        \|u\|_{\BVpp} := \|u\|_{p(\cdot)}
                         + \inf \left\{ \lambda >0 \; : \; \modubvpp(u/\lambda)\leq 1 \right\}
    \]
for $u \in \BVpp(\Omega)$. Let us establish that this definition yields a norm. 

\begin{teoreema} \label{normi}
    Let $\Omega\subset\Rn$ be open. Then $\BVpp(\Omega)$
    equipped with $\|\cdot\|_{\BVpp}$ is a norm space.
\end{teoreema}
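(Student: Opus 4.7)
The plan is to verify the standard norm axioms, treating $\|u\|_{\BVpp}$ as the sum of two pieces: the $\Lp$-norm, which is already known to be a norm on $\Lp(\Omega) \supset \BVpp(\Omega)$, and a Luxemburg-type expression built from the pseudo-modular $\modubvpp$. Since a sum of seminorms one of which is a norm is again a norm, it suffices to prove that
\[
q(u) := \inf \left\{ \lambda > 0 \; : \; \modubvpp(u/\lambda) \leq 1 \right\}
\]
defines a seminorm on $\BVpp(\Omega)$. Definiteness of the whole expression then comes for free from the $\Lp$-part: if $\|u\|_{\BVpp} = 0$, then $\|u\|_{p(\cdot)} = 0$, so $u = 0$ almost everywhere.

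First I would show that $q$ is well defined and finite on $\BVpp(\Omega)$. For $u \in \BVpp(\Omega)$ we have $\modubvpp(u) < \infty$, and by Lemma \ref{lemma_pseudomod_ominaisuudet} the map $\lambda \mapsto \modubvpp(u/\lambda)$ is continuous and decreasing on $(0,\infty)$, with $\modubvpp(u/\lambda) \to 0$ as $\lambda \to \infty$ (e.g.\ from convexity and $\modubvpp(0)=0$, which follows by choosing the constant sequence $u_i \equiv 0$ in the defining infimum). Hence the set in the definition of $q(u)$ is nonempty and $q(u) < \infty$; moreover continuity in $\lambda$ yields $\modubvpp(u/q(u)) \leq 1$ whenever $q(u) > 0$.

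Next I would verify the seminorm axioms for $q$. Symmetry $q(-u) = q(u)$ follows because any admissible sequence $u_i \to u$ in $\Lp(\Omega)$ gives $-u_i \to -u$ in $\Lp(\Omega)$ with $|\nabla(-u_i)| = |\nabla u_i|$, so $\modubvpp(-u) = \modubvpp(u)$. For positive homogeneity with scalar $\alpha \ne 0$, the substitution $\lambda = |\alpha|\mu$ together with symmetry gives $q(\alpha u) = |\alpha| q(u)$. The triangle inequality is the standard Luxemburg argument: given $\lambda_1 > q(u)$ and $\lambda_2 > q(v)$, convexity of $\modubvpp$ (from Lemma \ref{lemma_pseudomod_ominaisuudet}) yields
\[
\modubvpp\!\left( \frac{u+v}{\lambda_1+\lambda_2} \right)
\leq \frac{\lambda_1}{\lambda_1+\lambda_2}\modubvpp(u/\lambda_1)
+ \frac{\lambda_2}{\lambda_1+\lambda_2}\modubvpp(v/\lambda_2) \leq 1,
\]
so $q(u+v) \leq \lambda_1+\lambda_2$, and taking infima gives $q(u+v) \leq q(u)+q(v)$.

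The main conceptual point—really the only non-bookkeeping step—is that the standard Luxemburg construction goes through for $\modubvpp$, which requires precisely the properties convexity, balance and monotonicity in $\lambda$ recorded in (or immediately derivable from) Lemma \ref{lemma_pseudomod_ominaisuudet}. The usual worry, that $\modubvpp(u) = 0$ need not imply $u = 0$ (highlighted in the remark), is harmless here because definiteness of $\|\cdot\|_{\BVpp}$ is supplied by the additive $\Lp$-term; this is exactly why that term was included in the definition.
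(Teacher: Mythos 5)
Your proof is correct and follows essentially the same route as the paper: both decompose $\|\cdot\|_{\BVpp}$ into the $\Lp$-norm plus the Luxemburg functional of the convex pseudo-modular, with definiteness supplied by the $\Lp$-term. The only difference is that the paper simply cites \cite[Theorem 1.5]{Mus83} for the fact that a convex pseudo-modular induces a homogeneous pseudo-norm, whereas you verify the Luxemburg seminorm axioms (finiteness, symmetry, homogeneity, convexity-based triangle inequality) by hand.
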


\begin{proof}
    We know that $\norm{ \cdot }_{p(\cdot)}$ is a norm.
    By \cite[Theorem 1.5]{Mus83}, the convex pseudo-modular
    $\modubvpp$ defines a homogeneous pseudo-norm as
    \[
        \inf \left\{ \lambda >0 \; : \; \modubvpp(u/\lambda)\leq 1 \right\} \, .
    \]
    It is clear that the sum of a norm and homogeneous
    pseudo-norm defines a norm.
\end{proof}

One of the main motivations to consider the mixed $\BV$-Sobolev space
in this paper is the following lower semicontinuity property. A similar
property is true, and well known, in the classical BV-space, see
e.g.~\cite[Theorem 5.2.1]{Zie89}. On the other hand, such a result
is not available in the Sobolev space $W^{1,1}(\Omega)$; thus it is reasonable
to consider BV-type behaviour in $Y$.

\begin{teoreema} \label{puolijatku} Let $u_{i}\in\BVpp(\Omega)$ be such that $u_{i}\to u$ in $\Lp(\Omega)$. Then
\[\modubvpp(u)\leq\liminf\limits_{i\to\infty}\modubvpp(u_{i}).\]
\end{teoreema}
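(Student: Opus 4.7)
The plan is to use a standard diagonal argument that is characteristic of relaxed functionals. The point is that $\modubvpp(u)$ is itself defined as an infimum over approximating Lipschitz sequences, so to bound it from above we only need to exhibit \emph{one} suitable sequence converging to $u$ in $\Lp(\Omega)$. Sequences realising (almost) the infimum for each $u_i$ can be concatenated via a diagonal selection to produce such a sequence for $u$.

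First, I would discard the trivial case by assuming $\liminf_{i \to \infty} \modubvpp(u_i) < \infty$ and, passing to a subsequence, that the $\liminf$ is in fact a limit. Then, for each fixed $i$, the definition of $\modubvpp(u_i)$ as an infimum supplies a sequence $(v_{i,j})_{j=1}^{\infty} \subset \Lip(\Omega) \cap \Lp(\Omega)$ with $v_{i,j} \to u_i$ in $\Lp(\Omega)$ as $j \to \infty$ and
\[
    \liminf_{j \to \infty} \int_{\Omega} |\nabla v_{i,j}(x)|^{p(x)} \, dx
    \; \leq \; \modubvpp(u_i) + \tfrac{1}{i} .
\]
Using the definition of $\liminf$ and the convergence $v_{i,j} \to u_i$, I can then choose an index $j(i)$ such that both
\[
    \|v_{i,j(i)} - u_i\|_{\Lp(\Omega)} < \tfrac{1}{i}
    \qquad \text{and} \qquad
    \int_{\Omega} |\nabla v_{i,j(i)}(x)|^{p(x)} \, dx
    \; \leq \; \modubvpp(u_i) + \tfrac{2}{i} .
\]

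Setting $w_i := v_{i,j(i)}$, the triangle inequality gives
\[
    \|w_i - u\|_{\Lp(\Omega)}
    \; \leq \; \|w_i - u_i\|_{\Lp(\Omega)} + \|u_i - u\|_{\Lp(\Omega)}
    \; \longrightarrow \; 0 ,
\]
so $(w_i) \subset \Lip(\Omega) \cap \Lp(\Omega)$ is an admissible test sequence for $\modubvpp(u)$. Consequently, by the very definition of $\modubvpp(u)$ as the infimum of such $\liminf$s,
\[
    \modubvpp(u)
    \; \leq \; \liminf_{i \to \infty} \int_{\Omega} |\nabla w_i(x)|^{p(x)} \, dx
    \; \leq \; \liminf_{i \to \infty} \Bigl( \modubvpp(u_i) + \tfrac{2}{i} \Bigr)
    \; = \; \liminf_{i \to \infty} \modubvpp(u_i) ,
\]
which is the desired inequality.

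The only delicate point, and the one worth stating carefully, is the diagonal extraction: since for each $i$ the number $\modubvpp(u_i)$ is only an infimum of $\liminf$s (rather than being attained), we must first fix an almost-minimising sequence $(v_{i,j})_j$ for each $i$, and only afterwards select the diagonal index $j(i)$ so as to control simultaneously the $\Lp$-distance to $u_i$ and the $p(\cdot)$-energy of the gradient. Once this is done, no compactness or semicontinuity of the integrand is invoked — the lower semicontinuity is baked into the definition via relaxation, which is precisely the advantage of this approach that is advertised in the introduction.
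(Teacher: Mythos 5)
Your proof is correct and takes essentially the same route as the paper's: for each $u_i$ you select a near-optimal Lipschitz approximant $w_i$ with $\|w_i-u_i\|_{p(\cdot)}<\tfrac{1}{i}$ and gradient energy within $\tfrac{2}{i}$ of $\modubvpp(u_i)$, observe that $w_i\to u$ in $\Lp(\Omega)$, and invoke the definition of $\modubvpp(u)$ as an infimum. The paper performs exactly this diagonal selection, only stating it more tersely.
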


\begin{proof}
According to the definition of $\modubvpp$ for every $i=1,2,\ldots$ we can choose a function $v_{i}\in\Lip(\Omega)\cap \Lp(\Omega)$ such that
\[\|u_{i}-v_{i}\|_{p(\cdot)}<\frac{1}{i}\]
and
\[\Big|\modubvpp(u_{i})-\int_{\Omega}|\nabla v_{i}|^{p(x)}dx\Big|<\frac{1}{i}.\]
Since
\[\|u-v_{i}\|_{p(\cdot)}\leq\|u-u_{i}\|_{p(\cdot)}+\|u_{i}-v_{i}\|_{p(\cdot)}\to 0\]
as $i\to\infty$, we obtain that
\begin{align*}\modubvpp(u)&\leq\liminf\limits_{i\to\infty}\int_{\Omega}|\nabla v_{i}|^{p(x)}dx\leq\liminf\limits_{i\to\infty}\Big(\modubvpp(u_{i})+\frac{1}{i}\Big)\\
&\leq\liminf\limits_{i\to\infty}\modubvpp(u_{i}).\qedhere
\end{align*}
\end{proof}

Similarly as the space $\BVp (\Omega)$, the mixed BV-Sobolev space
$\BVpp (\Omega)$ is a Banach space.


\begin{teoreema}
The space $\BVpp(\Omega)$ equipped with the
norm $\|\cdot\|_{\BVpp}$ is a Banach space.
\end{teoreema}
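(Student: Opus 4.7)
The plan is to reduce completeness to the two ingredients that are already in hand: completeness of $\Lp(\Omega)$ and the lower semicontinuity of $\modubvpp$ from Theorem \ref{puolijatku}. I would take a Cauchy sequence $(u_i)$ in $(\BVpp(\Omega), \norm{\cdot}_{\BVpp})$. Because $\norm{\cdot}_{p(\cdot)} \leq \norm{\cdot}_{\BVpp}$, the sequence is also Cauchy in $\Lp(\Omega)$, which is complete, so $u_i \to u$ in $\Lp(\Omega)$ for some $u \in \Lp(\Omega)$. The whole task is then to show $u \in \BVpp(\Omega)$ and $\norm{u - u_i}_{\BVpp} \to 0$.

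For the norm convergence, fix $\ep > 0$ and choose $N$ so that $\norm{u_i - u_j}_{\BVpp} < \ep$ for all $i,j \geq N$. From the definition of the norm, the homogeneous pseudo-norm part satisfies
\[
    \inf \left\{ \lambda > 0 \; : \; \modubvpp((u_i - u_j)/\lambda) \leq 1 \right\} < \ep ,
\]
which, together with the fact that $\lambda \mapsto \modubvpp(\cdot/\lambda)$ is decreasing (Lemma \ref{lemma_pseudomod_ominaisuudet}), implies $\modubvpp((u_i - u_j)/\ep) \leq 1$. Holding $i \geq N$ fixed and letting $j \to \infty$, one has $(u_i - u_j)/\ep \to (u_i - u)/\ep$ in $\Lp(\Omega)$, so Theorem \ref{puolijatku} gives
\[
    \modubvpp((u_i - u)/\ep) \leq \liminf_{j \to \infty} \modubvpp((u_i - u_j)/\ep) \leq 1 .
\]
Hence $\inf\{\lambda > 0 : \modubvpp((u_i - u)/\lambda) \leq 1\} \leq \ep$ for every $i \geq N$, and since $\norm{u_i - u}_{p(\cdot)} \to 0$ anyway, this yields $\norm{u_i - u}_{\BVpp} \to 0$.

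To conclude $u \in \BVpp(\Omega)$, I would combine convexity of $\modubvpp$ (Lemma \ref{lemma_pseudomod_ominaisuudet}) with the monotonicity of $\lambda \mapsto \modubvpp(\cdot/\lambda)$. Writing $u/\mu = \tfrac{1}{2} \cdot 2u_N/\mu + \tfrac{1}{2} \cdot 2(u - u_N)/\mu$ and choosing $\mu = 2\max(\ep, \lambda_N)$, where $\lambda_N$ witnesses $\modubvpp(u_N/\lambda_N) \leq 1$, convexity gives
\[
    \modubvpp(u/\mu) \leq \tfrac{1}{2} \modubvpp(u_N/(\mu/2)) + \tfrac{1}{2} \modubvpp((u - u_N)/(\mu/2)) \leq 1 ,
\]
so $u \in \BVpp(\Omega)$. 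The only subtle point, and what I would expect to be the main obstacle, is the asymmetry between the Cauchy hypothesis (which controls the modular of differences for \emph{pairs} $u_i - u_j$) and the target bound on $u_i - u$: this is precisely what the lower semicontinuity of $\modubvpp$ along $\Lp$-convergent sequences resolves, and it is the reason Theorem \ref{puolijatku} was proved first.
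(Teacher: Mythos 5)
Your proof is correct and follows essentially the same route as the paper: completeness of $\Lp(\Omega)$ gives the candidate limit, and the lower semicontinuity of $\modubvpp$ from Theorem \ref{puolijatku}, applied to $(u_i-u_j)/\lambda$ as $j\to\infty$, converts the Cauchy bound on pairs into the bound on $u_i-u$. The only cosmetic difference is the membership step: the paper gets $u\in\BVpp(\Omega)$ by bounding $\modubvpp(u_i)\leq M$ (norm-bounded implies modular-bounded since $p^{+}<\infty$) and applying semicontinuity directly, whereas you use convexity and the decomposition $u=u_N+(u-u_N)$; your version implicitly needs the (true, since $p^{+}<\infty$ and minimizing sequences scale) observation that $\modubvpp(u/\mu)<\infty$ for some $\mu>1$ forces $\modubvpp(u)\leq\mu^{p^{+}}\modubvpp(u/\mu)<\infty$.
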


\begin{proof}
If $(u_{i})_{i=1}^{\infty}$ is Cauchy sequence in $\BVpp(\Omega)$,
then it is Cauchy sequence in $\Lp(\Omega)$ and there exists $u\in\Lp(\Omega)$
such that $u_{i}\to u$ in $\Lp(\Omega)$. Every Cauchy sequence is bounded.
Boundedness in norm implies boundedness in modular in the case $p^{+} < \infty$.
Thus there exists $M>0$ such that for every $i=1,2,3\ldots$
    \[
        \modubvpp (u_i) \leq M \, .
    \]
Now Theorem \ref{puolijatku} implies  
\[\modubvpp(u)\leq\liminf\limits_{i\to\infty}\modubvpp(u_{i})<\infty\]
and $u\in\BVpp(\Omega)$. Let $\lambda>0$ and $N_{\lambda}$ be such that $i,j\geq N_{\lambda}$ implies
\[\modubvpp\Big(\frac{u_{i}-u_{j}}{\lambda}\Big)\leq 1.\]
Since $u_{i}-u_{j}\to u_{i}-u$ in $\Lp(\Omega)$ as $j\to\infty$, it follows from Theorem \ref{puolijatku} that
\[\modubvpp\Big(\frac{u_{i}-u}{\lambda}\Big)\leq\liminf\limits_{j\to\infty}\modubvpp\Big(\frac{u_{i}-u_{j}}{\lambda}\Big)\leq 1.\]
Letting $\lambda\to 0$ we have that $u_{i}\to u$ in $\BVpp(\Omega)$.
\end{proof}

In the next section we shall repeatedly use the following lemma and
Theorem \ref{puolijatku} to prove the properties of capacity.


\begin{lemma} \label{hilalemma}
    Let $u,v\in\BVpp(\Omega)$. Then
    \[\modubvpp(\max\{u,v\})+\modubvpp(\min\{u,v\})\leq\modubvpp(u)+\modubvpp(v).\]
\end{lemma}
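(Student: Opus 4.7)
The strategy is to approximate $u$ and $v$ by near-optimal Lipschitz sequences, replace them by their pointwise maxima and minima (which stay locally Lipschitz), and exploit the elementary gradient identity for $\max$ and $\min$ of Lipschitz functions.

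First, I would fix $\epsilon > 0$ and, using the definition of $\modubvpp$, choose sequences $(u_i), (v_i) \subset \Lip(\Omega) \cap \Lp(\Omega)$ with $u_i \to u$ and $v_i \to v$ in $\Lp(\Omega)$ such that
\[
    \liminf_{i \to \infty} \int_{\Omega} |\nabla u_i|^{p(x)} dx < \modubvpp(u) + \epsilon
    \quad \text{and} \quad
    \liminf_{i \to \infty} \int_{\Omega} |\nabla v_i|^{p(x)} dx < \modubvpp(v) + \epsilon.
\]
Passing twice to a subsequence (first to realise the $\liminf$ for $(u_i)$, then a further subsequence to make the integrals for $(v_i)$ converge as well), I may assume that both numerical sequences $c_i := \int_\Omega |\nabla u_i|^{p(x)} dx$ and $d_i := \int_\Omega |\nabla v_i|^{p(x)} dx$ converge and that $\lim c_i < \modubvpp(u) + \epsilon$, $\lim d_i < \modubvpp(v) + \epsilon$.

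Next I would set $M_i := \max\{u_i, v_i\}$ and $m_i := \min\{u_i, v_i\}$. These are locally Lipschitz since $\max$ and $\min$ preserve local Lipschitz continuity, and they lie in $\Lp(\Omega)$ because $|M_i|, |m_i| \leq |u_i| + |v_i|$. Using the pointwise bound $|\max\{a_1,b_1\} - \max\{a_2,b_2\}| \leq |a_1 - a_2| + |b_1 - b_2|$ (and its analogue for $\min$), one gets $M_i \to \max\{u,v\}$ and $m_i \to \min\{u,v\}$ in $\Lp(\Omega)$, so $M_i$ and $m_i$ are admissible sequences for $\modubvpp(\max\{u,v\})$ and $\modubvpp(\min\{u,v\})$ respectively.

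The key algebraic input is the pointwise a.e.\ identity
\[
    |\nabla M_i(x)|^{p(x)} + |\nabla m_i(x)|^{p(x)}
    = |\nabla u_i(x)|^{p(x)} + |\nabla v_i(x)|^{p(x)},
\]
obtained by splitting $\Omega$ into $\{u_i > v_i\}$, $\{u_i < v_i\}$, and $\{u_i = v_i\}$: on the first two sets $\nabla M_i$ and $\nabla m_i$ are respectively $\nabla u_i$ and $\nabla v_i$ (in one order or the other), and on the third $\nabla u_i = \nabla v_i$ almost everywhere, by the standard fact that the gradient of $u_i - v_i$ vanishes a.e.\ on its zero set. Integrating this yields $a_i + b_i = c_i + d_i$, where $a_i := \int_\Omega |\nabla M_i|^{p(x)} dx$ and $b_i := \int_\Omega |\nabla m_i|^{p(x)} dx$.

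Finally, by the definition of $\modubvpp$ applied to the sequences $(M_i)$ and $(m_i)$, together with the elementary fact $\liminf a_i + \liminf b_i \leq \liminf(a_i + b_i)$, I get
\[
    \modubvpp(\max\{u,v\}) + \modubvpp(\min\{u,v\})
    \leq \liminf_{i \to \infty}(a_i + b_i)
    = \lim_{i \to \infty}(c_i + d_i)
    < \modubvpp(u) + \modubvpp(v) + 2\epsilon,
\]
and letting $\epsilon \to 0$ finishes the argument. The only mildly delicate point is the double subsequence extraction ensuring that both $c_i$ and $d_i$ have actual limits along a common index; everything else is either the standard $\max$/$\min$ Lipschitz calculus or a direct appeal to the definition of $\modubvpp$.
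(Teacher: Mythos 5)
Your proof is correct and takes essentially the same route as the paper's: near-optimal locally Lipschitz approximating sequences, the a.e.\ identity $|\nabla\max\{u_i,v_i\}|^{p(x)}+|\nabla\min\{u_i,v_i\}|^{p(x)}=|\nabla u_i|^{p(x)}+|\nabla v_i|^{p(x)}$, and passage to the limit via the definition of $\modubvpp$ together with superadditivity of the limit inferior. One small repair: after you extract the subsequence realising $\liminf_i c_i$, the limit inferior of $d_i$ along that same index set may have increased beyond $\modubvpp(v)+\epsilon$, so rather than taking a further subsequence of the same indices you should reindex the two approximating sequences independently (the pairing of $u_i$ with $v_i$ is arbitrary, and both reindexed sequences still converge in $\Lp(\Omega)$ to $u$ and $v$ respectively), after which the argument goes through unchanged.
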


\begin{proof}
    Let $u_{i},v_{i}\in \Lip(\Omega)\cap\Lp(\Omega)$, $i=1,2,\ldots$
    be sequences such that $u_{i}\to u$, $v_{i}\to v$ in $\Lp(\Omega)$ and
    \[
        \int_{\Omega}|\nabla u_{i}|^{p(x)}dx \to \modubvpp(u) , \;
        \int_{\Omega}|\nabla v_{i}|^{p(x)}dx\to\modubvpp(v)
    \]
    as $i\to\infty$. Clearly $\max\{u_{i},v_{i}\}\to\max\{u,v\}$
    and $\min\{u_{i},v_{i}\}\to\min\{u,v\}$ in $\Lp(\Omega)$,
    as $i\to\infty$ and hence
    \begin{align*}
        \modubvpp(\max\{u,v\}) & + \modubvpp(\min\{u,v\}) \\
        \leq & \liminf\limits_{i\to\infty}\int_{\Omega}|\nabla\max\{u_{i},v_{i}\}|^{p(x)}dx \\ 
             & + \liminf\limits_{i\to\infty}\int_{\Omega}|\nabla\min\{u_{i},v_{i}\}|^{p(x)}dx \\
        \leq & \liminf\limits_{i\to\infty}\int_{\Omega}|\nabla\max\{u_{i},v_{i}\}|^{p(x)}
               + |\nabla\min\{u_{i},v_{i}\}|^{p(x)}dx \\
        = & \liminf\limits_{i\to\infty}\int_{\Omega}|\nabla u_{i}|^{p(x)}
            + |\nabla v_{i}|^{p(x)}dx \\
        = & \lim\limits_{i\to\infty}\int_{\Omega}|\nabla u_{i}|^{p(x)}dx
            + \lim\limits_{i\to\infty}\int_{\Omega}|\nabla v_{i}|^{p(x)}dx \\
        = & \modubvpp(u)+\modubvpp(v) . \qedhere
    \end{align*}
\end{proof}

If we assume $\Omega$ to be bounded and $p$ to be
log-H{\" o}lder continuous, our definition of mixed space is
equivalent with the definition of \cite{HarHL08}.
See our Definition \ref{HarHL08_maar42}.

\begin{teoreema}
\label{ekvi}Let $\Omega\subset\Rn$ be a bounded open set and $p$ log-H{\" o}lder continuous exponent with $p^{+}<\infty$. Then $\BVpp(\Omega)=\BVp(\Omega)$.
\end{teoreema}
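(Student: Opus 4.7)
The plan is to establish the two inclusions $\BVpp(\Omega) \subseteq \BVp(\Omega)$ and $\BVp(\Omega) \subseteq \BVpp(\Omega)$ separately. The first relies on lower semicontinuity and weak-compactness applied to an approximating Lipschitz sequence; the second uses a Meyers--Serrin-type construction together with Theorem \ref{variant_of_HarHL08_46}.

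For $\BVpp(\Omega) \subseteq \BVp(\Omega)$, I would pick $u \in \BVpp(\Omega)$ and an almost-optimal sequence $u_i \in \Lip(\Omega) \cap \Lp(\Omega)$ with $u_i \to u$ in $\Lp(\Omega)$ and $\int_\Omega |\nabla u_i|^{p(x)}\,dx \to \modubvpp(u)$. Using the pointwise bound $|\nabla u_i| \leq |\nabla u_i|^{p(x)} + 1$ on bounded $\Omega$, the sequence is bounded in $\BV(\Omega)$, so by $L^1$-lower semicontinuity of total variation $u \in \BV(\Omega)$ and $\|Du\|(Y) \leq \|Du\|(\Omega) < \infty$. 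Since $p$ is continuous, $Y$ is closed and $\Omega \setminus Y$ is open, exhausted by the open sets $G_\delta := \{x \in \Omega : p(x) > 1 + \delta\}$ on which $\Lp$ is reflexive (as $p^-_{G_\delta} \geq 1+\delta$ and $p^+ < \infty$). The gradients $\nabla u_i$ are bounded in $\Lp(G_\delta)$, so a subsequence converges weakly to some $g$; because $u_i \to u$ in $\Lp(\Omega)$ and hence in $\Ly(G_\delta)$, distributional uniqueness identifies $g$ with the Sobolev gradient of $u|_{G_\delta}$. Weak lower semicontinuity of the convex modular gives
\[
\int_{G_\delta} |\nabla u|^{p(x)}\,dx \leq \modubvpp(u) ,
\]
and monotone convergence as $\delta \to 0^+$ yields $\int_{\Omega \setminus Y} |\nabla u|^{p(x)}\,dx < \infty$. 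Combined with $\|Du\|(Y) < \infty$ this gives $\modubvpomega(u) < \infty$, hence $u \in \BVp(\Omega)$.

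For $\BVp(\Omega) \subseteq \BVpp(\Omega)$, given $u \in \BVp(\Omega)$ I would construct a Meyers--Serrin-type approximation. Choose an exhaustion $\{V_k\}$ of $\Omega$ by relatively compact open sets and a subordinate partition of unity $\{\zeta_k\}$ with $\zeta_k \in C_0^\infty(V_k)$, and set
\[
u_\ep := \sum_k \varphi_{\delta_k} \ast (\zeta_k u) ,
\]
where $\delta_k > 0$ is chosen small enough that $\|u_\ep - u\|_{\Lp(\Omega)} \leq \ep$ and each mollified piece stays supported in a slight enlargement of $V_k$. The resulting $u_\ep$ lies in $\Lip(\Omega) \cap \Lp(\Omega)$ since the sum is locally finite. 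To bound $\int_\Omega |\nabla u_\ep|^{p(x)}\,dx$ uniformly, I would apply Theorem \ref{variant_of_HarHL08_46} on the closed sets $\overline{V_k} \subset \Omega$ to control the mollified-gradient piece by $C\,\varrho_{\BVp(\overline{V_k})}(u)$, while treating the commutator term $\varphi_{\delta_k} \ast (u \nabla \zeta_k)$ directly in $\Lp$ using $u \in \Lp(\Omega)$ and the uniform bound on $|\nabla \zeta_k|$. Summing these bounds over $k$ yields $\modubvpp(u) \leq C \modubvpomega(u) < \infty$, hence $u \in \BVpp(\Omega)$.

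The main obstacle is the second inclusion. The partition-of-unity construction unavoidably produces commutator terms mixing $u$ with derivatives of cutoffs, and absorbing these into the variable-exponent modular at points of $Y$, where $p=1$ and no Sobolev regularity is available, is the delicate part. The log-Hölder assumption enters exactly here, via the uniform bound $R^{-(p^+_B - p^-_B)} \leq C$ that underlies Theorem \ref{variant_of_HarHL08_46}; without it, mollifications need not even stay in $\Lp$, and the whole approximation scheme would collapse.
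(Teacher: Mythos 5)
Your first inclusion, $\BVpp(\Omega)\subseteq\BVp(\Omega)$, is essentially the paper's argument: the bound $|\nabla u_i|\leq|\nabla u_i|^{p(x)}+1$ on bounded $\Omega$ gives $u\in\BV(\Omega)$ by $L^{1}$-lower semicontinuity, weak compactness of the gradients in $\Lp$ on the sets $\{p>1+\delta\}$ together with distributional identification of the limit gives $\nabla u$ there, and weak lower semicontinuity of the convex modular plus monotone convergence in $\delta$ finishes it. That half is sound (the paper organises the same idea via a diagonal subsequence over the exhaustion $V_j=\{p>1+1/j\}$, but your identification of the weak limit as $\nabla u$ makes that bookkeeping unnecessary).

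The gap is in the commutator estimate of the second inclusion. You propose to treat the terms $\varphi_{\delta_k}\ast(u\nabla\zeta_k)$ ``directly in $\Lp$ using $u\in\Lp(\Omega)$ and the uniform bound on $|\nabla\zeta_k|$.'' No such uniform bound exists: for a locally finite partition of unity subordinate to an exhaustion of a general bounded open $\Omega$, the sets where $\nabla\zeta_k\neq 0$ become thin near $\partial\Omega$ and $\norm{\nabla\zeta_k}_\infty\to\infty$. The resulting direct bound
\[
\sum_k\int_\Omega\left|u\nabla\zeta_k\right|^{p(x)}dx\ \lesssim\ \sum_k\norm{\nabla\zeta_k}_\infty^{p^{+}}\int_{\operatorname{supp}\zeta_k}|u|^{p(x)}\,dx
\]
has no reason to be finite for a general $u\in\Lp(\Omega)$, so the summed commutator contribution is not controlled. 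The standard repair, and exactly what the paper does, is to use the cancellation $\sum_k\nabla\zeta_k=0$: the commutator sum equals $\sum_k\bigl(\varphi_{\delta_k}\ast(u\nabla\zeta_k)-u\nabla\zeta_k\bigr)$, and since the partition is fixed in advance, each $\delta_k$ can be chosen so small that the $k$-th difference has modular less than $2^{-k}\ep$; the blow-up of $|\nabla\zeta_k|$ is absorbed into the choice of $\delta_k$, not into a global bound. A second, related point: summing $C\,\varrho_{\BVp(\overline{V_k})}(u)$ over $k$ diverges if the $\overline{V_k}$ increase to $\Omega$; you must apply Theorem \ref{variant_of_HarHL08_46} on closed sets with bounded overlap. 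The paper uses the annuli $U_i=\Omega_{i+1}\setminus\overline{\Omega}_{i-1}$, for which $\sum_i\chi_{\overline{U}_i}\leq 3$ and hence $\sum_i\varrho_{\BVp(\overline{U}_i)}(u)\leq 3\,\varrho_{\BVp(\Omega)}(u)$, using that $\norm{Du}(\cdot)$ is a measure. With these two corrections your Meyers--Serrin scheme becomes the paper's proof.
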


\begin{proof}
Let $u\in\BVp(\Omega)$ and $\Omega_{i}$ be subdomains of $\Omega$ such that  
$\overline{\Omega}_{i}\subset\Omega_{i+1}$
for every $i=1,2,\ldots$ and 
\[\bigcup\limits_{i=1}^{\infty}\Omega_{i}=\Omega.\] 
Let $U_{i}=\Omega_{i+1}\setminus\overline{\Omega}_{i-1}$ for $i=1,2\ldots$ where $\Omega_{0}=\emptyset$. 
Let $\{\psi_{i}\}_{i=1}^{\infty}$ be the partition of unity subordinate to the open cover $\{U_{i}\}_{i=1}^{\infty}$, that is functions $\psi_{i}\in C_{c}^{\infty}(U_{i})$ such that
$0\leq \psi_{i}\leq 1$ for every $i=1,2,\ldots$ and $\sum\limits_{i=1}^{\infty}\psi_{i}=1$ in $\Omega$. Let $\ep>0$. Obviously, $\overline{U}_{i}$ is a closed subset of $\Omega$ for every $i$, so according to Theorem \ref{variant_of_HarHL08_46} we can choose $\delta_{i}>0$ such that
\begin{align*}\int_{\Omega}\left|u_{\delta_{i}}\psi_{i}-u\psi_{i}\right|^{p(x)}dx&<2^{-i}\ep \\
\int_{\Omega}\left|u\nabla\psi_{i}-u_{\delta_{i}}\nabla \psi_{i}\right|^{p(x)}dx&<2^{-i}\ep \\
\varrho_{\mathrm{BV}^{p(\cdot)}(\overline{U}_{i})}(u_{\delta_{i}})&< C \varrho_{\mathrm{BV}^{p(\cdot)}(\overline{U}_{i})}(u)+2^{-i}\ep.
\end{align*}
The constant $C \geq 1$ is the constant from Theorem \ref{variant_of_HarHL08_46}.
We denote 
\begin{equation*}
v_{\ep} := \sum\limits_{i=1}^{\infty}u_{\delta_{i}}\psi_{i}.
\end{equation*}
It is clear that $v_{\ep}\in \Lip(\Omega)\cap\Lp(\Omega)$ and
\begin{align*}\int_{\Omega}\left|v_{\ep}-u\right|^{p(x)}dx&=\int_{\Omega}\left|\sum\limits_{i=1}^{\infty}u_{\delta_{i}}\psi_{i}-\sum\limits_{i=1}^{\infty}u\psi_{i}\right|^{p(x)}dx \\
&\leq\int_{\Omega}\left(\sum\limits_{i=1}^{\infty}\left|u_{\delta_{i}}\psi_{i}-u\psi_{i}\right|\right)^{p(x)}dx \\
&\leq\int_{\Omega}2^{p^{+}}\sum\limits_{i=1}^{\infty}\left|u_{\delta_{i}}\psi_{i}-u\psi_{i}\right|^{p(x)}dx \\
&\leq 2^{p^{+}}\sum\limits_{i=1}^{\infty}\int_{\Omega}\left|u_{\delta_{i}}\psi_{i}-u\psi_{i}\right|^{p(x)}dx \\
&\leq 2^{p^{+}}\ep.
\end{align*}
Thus $v_{\ep}\to u$ in $L^{p(\cdot)}(\Omega)$ as $\ep\to 0$. Since $\sum\limits_{i=1}^{\infty}\nabla \psi_{i}=0$ on $\Omega$ we obtain the following identity for the derivative
\begin{align*}\int_{\Omega}\left|\nabla v_{\ep}\right|^{p(x)}dx&=\int_{\Omega}\left|\sum\limits_{i=1}^{\infty}\nabla (u_{\delta_{i}}\psi_{i})\right|^{p(x)}dx\\
&=\int_{\Omega}\left|\sum\limits_{i=1}^{\infty}\psi_{i}\nabla u_{\delta_{i}}+u_{\delta_{i}}\nabla \psi_{i}\right|^{p(x)}dx\\
&=\int_{\Omega}\left|\sum\limits_{i=1}^{\infty}\psi_{i}\nabla u_{\delta_{i}}-\sum\limits_{i=1}^{\infty}(u\nabla \psi_{i}-u_{\delta_{i}}\nabla \psi_{i})\right|^{p(x)}dx.
\end{align*}
Thus
\begin{align*}
\int_{\Omega}&\left|\nabla v_{\ep}\right|^{p(x)}dx\\
&\leq 2^{p^{+}}\int_{\Omega}\left|\sum\limits_{i=1}^{\infty}\psi_{i}\nabla u_{\delta_{i}}\right|^{p(x)}+\left|\sum\limits_{i=1}^{\infty}(u\nabla \psi_{i}-u_{\delta_{i}}\nabla \psi_{i})\right|^{p(x)}dx\\
&\leq C\sum\limits_{i=1}^{\infty}\int_{\Omega}\left|\psi_{i}\nabla u_{\delta_{i}}\right|^{p(x)}dx+C\sum\limits_{i=1}^{\infty}\int_{\Omega}\left|u\nabla \psi_{i}-u_{\delta_{i}}\nabla \psi_{i}\right|^{p(x)}dx\\
&\leq C\sum\limits_{i=1}^{\infty}\int_{\overline{U}_{i}}\left|\nabla u_{\delta_{i}}\right|^{p(x)}dx+C\sum\limits_{i=1}^{\infty}\int_{\Omega}\left|u\nabla \psi_{i}-u_{\delta_{i}}\nabla \psi_{i}\right|^{p(x)}dx.
\end{align*}
For the last sum we have that
\[\sum\limits_{i=1}^{\infty}\int_{\Omega}\left|u\nabla \psi_{i}-u_{\delta_{i}}\nabla \psi_{i}\right|^{p(x)}dx\leq \ep.\]
The first sum can be estimated as follows
\begin{align*}
\sum\limits_{i=1}^{\infty}\int_{\overline{U}_{i}}\left|\nabla u_{\delta_{i}}\right|^{p(x)}dx&=\sum\limits_{i=1}^{\infty}\varrho_{\text{BV}^{p(\cdot)}(\overline{U}_{i})}(u_{\delta_{i}})\\
&\leq \sum\limits_{i=1}^{\infty}\big(C \varrho_{\text{BV}^{p(\cdot)}(\overline{U}_{i})}(u) + 2^{-i} \ep \big)\\
&\leq \sum\limits_{i=1}^{\infty}\left(C \|Du\|(\overline{U}_{i}\cap Y) + C \int_{\overline{U}_{i}\setminus Y}\left|\nabla u\right|^{p(x)}dx\right)+\ep\\
&\leq C \varrho_{\text{BV}^{p(\cdot)}(\Omega)}(u)+\ep.
\end{align*}
Here we used the fact that $\|Du\|(\cdot)$ is a measure and that $\sum\limits_{i=1}^{\infty}\chi_{\overline{U}_{i}}\leq 3$. Thus
\[\liminf\limits_{\ep\to 0}\int_{\Omega}|\nabla v_{\ep}|^{p(x)}dx<\infty\]
and $u\in\BVpp(\Omega)$.

Assume next that $u\in\BVpp(\Omega)$. Let $u_{i}\in \Lip(\Omega)\cap \Lp(\Omega)$, $i=1,2,\ldots$ be such that $u_{i}\to u$ in $\Lp(\Omega)$ and
\[\int_{\Omega}|\nabla u_{i}|^{p(x)}dx\to\modubvpp(u)\]
as $i\to\infty$. For $j=1,2,\ldots$ let
\[V_{j}=\Big\{x\in\Omega: p(x)>1+\frac{1}{j}\Big\}.\]
Clearly $V_{j}\subset V_{j+1}$ for every $j=1,2,\ldots$ and the sets $V_{j}$ form an open covering for $\Omega\setminus Y$. Without loss of generality we may assume that the sets $V_{j}$ are nonempty. 
The sequences $(u_{i})_{i=1}^{\infty}$ and $(\nabla u_{i})_{i=1}^{\infty}$ are bounded in $\Lp(V_{j})$ for each $j=1,2,\ldots$, and the spaces are reflexive due to $p^{-}_{V_j} > 1$. In what follows, we do a diagonalization argument: starting from index $1$, we always choose the subsequences from previous subsequences while passing from $V_j$ to $V_{j+1}$. Let now $j$ be arbitrary. By boundedness and reflexivity, there exists a subsequence, hereafter taken to be the whole sequence, which has weak limits
\[
    u_i \rightharpoonup v_j \, \text{ and } \, \nabla u_i \rightharpoonup w_j
\]
in $\Lp(V_j)$ for each $j$. Using the definition of weak derivative and uniqueness of weak limit, it can be seen that actually $w_j = \nabla v_j$. From this collection of subsequences, we pick a diagonal sequence; for simplicity we again denote this by $u_i$. The subsequence for $v_{j+1}$ has been extracted from the previous one, so we have for the diagonal sequence
    \begin{align*}
        & u_i \rightharpoonup v_j \, \text{ in } \, \Lp(V_j) , \\
        & u_i \rightharpoonup v_{j+1} \, \text{ in } \, \Lp(V_{j+1}) ,
    \end{align*}
and the same for the gradients. By the definition of weak convergence, it is an easy calculation to see that actually
    \[
        \int_{V_j} v_j w \, dx = \int_{V_j} v_{j+1} w \, dx 
    \]
for all $w \in L^{p^\prime(\cdot)}(V_j)$. Here we also use the facts that $V_j \subset V_{j+1}$ and that the dual space is just the conjugate Lebesgue space. By a well known variation lemma we now deduce that $v_j = v_{j+1}$ a.e.~in $V_j$. The same can be seen for gradients.
We can now define a function $v$ on $\Omega\setminus Y$ by setting
$v(x)=v_{j}(x)$ when $x\in V_{j}\setminus V_{j-1}$. It is clear that now
$\nabla v = \nabla v_j$ a.e.~in $V_j \setminus V_{j-1}$. Since $u_{i}\to u$ in $\Lp(V_{j})$ for every $j=1,2,\ldots$ it follows that $u=v$ a.e. in $V_{j}$ for every $j=1,2,\ldots$
Hence $u=v$ a.e. in $\Omega\setminus Y$ and $v\in\Lp(\Omega\setminus Y)$. By Lemma 2.1. in \cite{HarHL08}
\[\int_{V_{j}}|\nabla v|^{p(x)}dx\leq\liminf\limits_{i\to\infty}\int_{V_{j}}|\nabla u_{i}|^{p(x)}dx\leq\modubvpp(u)\]
for every $j=1,2,\ldots$ This implies that
\[\int_{\Omega\setminus Y}|\nabla v|^{p(x)}dx=\lim\limits_{j\to\infty} \int_{V_{j}}|\nabla v|^{p(x)}dx\leq\modubvpp(u)\]
and hence $|\nabla v|\in\Lp(\Omega\setminus Y)$. Thus $v\in\W(\Omega\setminus Y)$ and $u=v$ a.e. in $\Omega\setminus Y$ implies that $u\in\W(\Omega\setminus Y)$. Since $|\Omega|<\infty$, we obtain that $u_{i}\to u$ in $L^{1}(\Omega)$ as $i\to\infty$ and
by the lower semicontinuity of the variation measure and H{\" o}lder's inequality
    \begin{align*}
        \|Du\|(\Omega)
        \leq & \liminf\limits_{i\to\infty}\int_{\Omega}|\nabla u_{i}|dx \\
        \leq & C_{\Omega,p(\cdot)} \max\left\{ \modubvpp(u) , \left( \modubvpp(u) \right)^\frac{1}{p^{+}} \right\} \\
        < & \infty .
    \end{align*}
Thus $u\in \BV(\Omega)\cap\W(\Omega\setminus Y)$ and $u\in\BVp(\Omega)$.
\end{proof}

\begin{huomautus}
    By the above proof, it is clear that when $\Omega$ is bounded and $p$ is bounded
    and log-H{\" o}lder continuous, the two pseudo-modulars will have the following
    ''equivalence-like'' relation:
    \[
        \frac{1}{C} \modubvpp(u)
        \leq \, \modubvp(u)
        \leq C \max\left\{ \modubvpp(u) , \left( \modubvpp(u) \right)^\frac{1}{p^{+}} \right\} .
    \]
    We actually show in the next theorem that this can be improved to a true
    equivalence.
\end{huomautus}

\begin{teoreema} \label{modular_equivalence}
    Let $p$ be log-H{\" o}lder continuous, $p^{+} < \infty$, and $\Omega$ bounded. 
    We may state the following true equivalence:
    \[
        \frac{1}{C} \modubvpp(u)
        \leq \, \modubvp(u)
        \leq C \modubvpp(u) .
    \]
\end{teoreema}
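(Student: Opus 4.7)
The plan is to sharpen what is already proved in Theorem \ref{ekvi}. The left inequality $\modubvpp(u) \leq C\,\modubvp(u)$ is in fact already present there: the partition-of-unity approximation $v_\ep \in \Lip(\Omega) \cap \Lp(\Omega)$ constructed in the first half of that proof satisfies $v_\ep \to u$ in $\Lp(\Omega)$ together with
\[
\int_{\Omega} |\nabla v_\ep|^{p(x)}\,dx \leq C\,\modubvp(u) + O(\ep),
\]
so passing to the liminf in the definition of $\modubvpp(u)$ gives the claim immediately.

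For the right inequality $\modubvp(u) \leq C\,\modubvpp(u)$ I would pick a minimizing sequence $u_i \in \Lip(\Omega) \cap \Lp(\Omega)$ with $u_i \to u$ in $\Lp(\Omega)$ and $\int_\Omega |\nabla u_i|^{p(x)}\,dx \to \modubvpp(u)$. The Sobolev part of $\modubvp(u)$ is already under control: the diagonalization and weak-compactness argument in the second half of the proof of Theorem \ref{ekvi} delivers $\int_{\Omega \setminus Y}|\nabla u|^{p(x)}\,dx \leq \modubvpp(u)$. What really needs to be improved is only the variation $\|Du\|(\Omega \cap Y)$, since it is there that the earlier Hölder step produced the unwanted $(\modubvpp(u))^{1/p^+}$ factor.

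The trick will be to localize near $Y$. Log-Hölder continuity in particular makes $p$ continuous, so $Y = \{p=1\}$ is closed in $\Omega$ and the sets $U_k := \{x \in \Omega : p(x) < 1 + 1/k\}$ are open, decrease to $Y$, and satisfy $|U_k \setminus Y| \to 0$ by continuity of Lebesgue measure from above (using $|\Omega|<\infty$). On each $U_k$ the elementary pointwise inequality $|\nabla u_i| \leq |\nabla u_i|^{p(x)} + 1$, valid everywhere because $p \geq 1$, integrates to
\[
\int_{U_k} |\nabla u_i| \, dx \leq \int_\Omega |\nabla u_i|^{p(x)}\,dx + |U_k \setminus Y|.
\]
Since $\Omega$ is bounded I also have $u_i \to u$ in $L^1(\Omega)$, so lower semicontinuity of the total variation on the open set $U_k$ yields $\|Du\|(U_k) \leq \modubvpp(u) + |U_k \setminus Y|$. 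Letting $k \to \infty$ and using outer regularity of the finite Radon measure $\|Du\|$ gives $\|Du\|(Y) \leq \modubvpp(u)$, and combining with the Sobolev piece I obtain $\modubvp(u) \leq 2\,\modubvpp(u)$.

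The main obstacle is conceptual: one has to notice that the $\max$ in the preceding remark is an artifact of applying Hölder on all of $\Omega$. Restricting to the thin open sets $U_k$ kills the non-homogeneous additive term $|U_k \setminus Y|$ coming from the pointwise bound, so no norm-to-modular conversion is needed and the resulting estimate is linear.
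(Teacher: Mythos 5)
Your proof is correct, and for the decisive estimate it takes a genuinely different route from the paper. Both arguments reduce the theorem to bounding $\norm{Du}(\Omega\cap Y)$ by $C\,\modubvpp(u)$, after harvesting the left inequality and the Sobolev part $\varrho_{\Lp(\Omega\setminus Y)}(\nabla u)\leq\modubvpp(u)$ from the proof of Theorem \ref{ekvi}, and both localize on a shrinking family of open neighbourhoods of $Y$. The paper uses the metric neighbourhoods $U_j=\{x\in\Omega : \mathrm{dist}(x,\Omega\cap Y)<1/j\}$, reapplies the H{\"o}lder step of Theorem \ref{ekvi} on each $U_j$ to get $\norm{Du}(U_j)\leq C\max\{\varrho_{\BVpp(\Omega)}(u),(\varrho_{\BVpp(\Omega)}(u))^{1/p^{+}_{U_j}}\}$, and then invokes log-H{\"o}lder continuity to force $p^{+}_{U_j}\to 1$, which removes the offending exponent in the limit. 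You instead take the sublevel sets $U_k=\{p<1+1/k\}$ and replace H{\"o}lder's inequality by the pointwise bound $t\leq t^{p}+1$, so no norm-to-modular conversion occurs and the error is the additive term $|U_k\setminus Y|\to 0$. Your version is more elementary and needs only continuity of $p$ (to make $U_k$ open) and $|\Omega|<\infty$, rather than the quantitative statement $p^{+}_{U_j}\to 1$; the paper's version has the advantage of recycling wholesale the estimates already established in Theorem \ref{ekvi}. One detail to make explicit: integrating $|\nabla u_i|\leq|\nabla u_i|^{p(x)}+1$ over $U_k$ as literally stated produces the additive term $|U_k|$, which need not vanish when $|Y|>0$; to obtain $|U_k\setminus Y|$ you should split the integral and use that $|\nabla u_i|=|\nabla u_i|^{p(x)}$ exactly on $Y$, where $p=1$. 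With that one-line correction the argument is complete.
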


\begin{proof}
    By the proof of Theorem \ref{ekvi} it is clear that
    \[
        \begin{cases}
        \varrho_{\BVpp(\Omega)}(u)
        \leq C \varrho_{\BVp(\Omega)}(u) , & \mbox{ } \\
        \varrho_{\BVp(\Omega \setminus Y)}(u)
        \leq \varrho_{\BVpp(\Omega)}(u) , & \mbox{ }
    \end{cases}
    \]
    so it remains to estimate $\norm{Du}(\Omega \cap Y)$. Let $u \in \BVpp(\Omega)$.
    The usual convention is that the variation measure is extended from open
    sets to Borel sets as
    \[
        \norm{ Du }(\Omega \cap Y)
        = \inf \left\{ \norm{ Du }(U) \; : \; U \text{ open} , \, \Omega \cap Y \subset U \right\} \, .
    \]
    Let us define the sets
    \[
        U_j := \left\{ x \in \Omega \; : \; \mathrm{dist} \left( x , \Omega \cap Y \right) < \frac{1}{j}  \right\}
    \]
    for integers $j \geq 1$ and denote
    \[
        p^{+}_j := p^{+}_{ U_j } \, .
    \]
    Since the sets $U_j$ shrink monotonously to $Y$ at the least at
    rate $\frac{1}{j}$ and $p$ is log-H{\" o}lder continuous, it is clear
    that $p^{+}_j \to 1$. Similarly as in the proof and remark of
    Theorem \ref{ekvi}, we may deduce that
    \begin{align*}
        \norm{ Du }(\Omega \cap Y)
        \leq & \norm{ Du }( U_j ) \\
        \leq & C \max \left\{ \varrho_{\BVpp(U_j)}(u) , \left( \varrho_{\BVpp(U_j)}(u) \right)^\frac{1}{p^{+}_j} \right\} \\
        \leq & C \max \left\{ \varrho_{\BVpp(\Omega)} (u) , \left( \varrho_{\BVpp(\Omega)} (u) \right)^\frac{1}{p^{+}_j} \right\}
    \end{align*}
    for all $j$. Here we have used the fact that since $u \in \BVpp(\Omega)$,
    also $u \in \BVpp(U_j)$, and we have used the estimates from the proof
    of Theorem \ref{ekvi} for $\BVpp(U_j)$ instead of $\BVpp(\Omega)$.
    Letting $j \to \infty$ and thus $p^{+}_j \to 1$ concludes the proof.
\end{proof}

\section{Mixed BV-Sobolev capacity}

In this section, we will define a mixed capacity. It is the capacity naturally induced by
the mixed space $\BVpp(\Omega)$. Let us define the admissible functions. If
$E\subset\Omega$, we denote by $\adm(E)$ functions $u\in\BVpp(\Omega)$, $0\leq u\leq 1$
such that $u=1$ in an open neighbourhood of set $E$.

\begin{maaritelma}Let $E\subset\Omega$. The mixed capacity is defined as
\[ \kapbvp(E) := \inf\left\{\modup(u)+\modubvpp(u): u\in\adm(E)\right\} . \]
\end{maaritelma}

We will study the properties of the mixed capacity and conclude that it is in
fact a Choquet capacity. In this section, we will heavily utilize the semicontinuity
property of Theorem \ref{puolijatku} and the lattice property of Lemma
\ref{hilalemma}. Similar principles are used also when dealing with the
classical BV capacity, see \cite{FedZ72} and \cite[Chapter 5.12]{Zie89}.

\begin{teoreema} \label{theor_outermeasure}
The mixed capacity $\kapbvp(\cdot)$ is an outer measure.
\end{teoreema}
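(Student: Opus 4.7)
The plan is to verify the three standard outer measure axioms: $\kapbvp(\emptyset)=0$, monotonicity, and countable subadditivity. The first is immediate by taking the zero function as admissible for the empty set (using any open neighbourhood), and monotonicity follows because $E \subset F$ implies $\adm(F) \subset \adm(E)$, so the infimum over a larger admissible class is no larger. The substantive content is countable subadditivity, and this is where Lemma \ref{hilalemma} and Theorem \ref{puolijatku} come in.

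For countable subadditivity, given sets $E_1, E_2, \ldots \subset \Omega$ I may assume each $\kapbvp(E_i)$ is finite. Fix $\ep > 0$ and for each $i$ pick $u_i \in \adm(E_i)$ with $\modup(u_i) + \modubvpp(u_i) \leq \kapbvp(E_i) + \ep/2^i$. Define the partial maxima $v_k := \max\{u_1, \ldots, u_k\}$. Each $v_k$ lies in $\adm\bigl(\bigcup_{i \leq k} E_i\bigr)$ since $0 \leq v_k \leq 1$ and $v_k = 1$ on $\bigcup_{i \leq k} U_i$, where $U_i$ is the open neighbourhood on which $u_i = 1$. Iterating Lemma \ref{hilalemma} (discarding the minimum term, which is non-negative) gives $\modubvpp(v_k) \leq \sum_{i=1}^{k} \modubvpp(u_i)$, and the elementary pointwise identity $|\max\{a,b\}|^{p(x)} + |\min\{a,b\}|^{p(x)} = |a|^{p(x)} + |b|^{p(x)}$ for $a, b \geq 0$ gives the same bound $\modup(v_k) \leq \sum_{i=1}^{k} \modup(u_i)$.

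Next I pass to the limit. The sequence $v_k$ is increasing and bounded by $1$, hence converges pointwise to some $v$ with $0 \leq v \leq 1$; by monotone convergence $\modup(v) \leq \sum_i \modup(u_i) < \infty$, so $v \in \Lp(\Omega)$, and dominated convergence applied to $|v_k - v|^{p(x)} \leq 1$ (dominated by the integrable $|v|^{p(x)}$ where nonzero, or by a constant on a set of finite measure created by the bound on $\modup(v)$) yields modular convergence $\modup(v_k - v) \to 0$, which is equivalent to $L^{p(\cdot)}$ convergence since $p^{+} < \infty$. Furthermore $v = 1$ on the open set $\bigcup_i U_i \supset \bigcup_i E_i$, so $v \in \adm\bigl(\bigcup_i E_i\bigr)$.

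Finally I invoke Theorem \ref{puolijatku}: $\modubvpp(v) \leq \liminf_k \modubvpp(v_k) \leq \sum_i \modubvpp(u_i)$. Combining the two modular estimates,
\[
\kapbvp\Bigl(\bigcup_i E_i\Bigr) \leq \modup(v) + \modubvpp(v) \leq \sum_i \bigl(\modup(u_i) + \modubvpp(u_i)\bigr) \leq \sum_i \kapbvp(E_i) + \ep,
\]
and letting $\ep \to 0$ concludes the argument. The main obstacle is purely conceptual rather than technical, namely arranging the diagonal limit so that both the admissibility (which requires an \emph{open} set where $v = 1$) and the lower semicontinuity hypothesis (which requires $\Lp$-convergence, not just pointwise) hold simultaneously; the dominated/monotone convergence combined with the pointwise lattice bounds of Lemma \ref{hilalemma} resolves this cleanly.
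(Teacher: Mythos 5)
Your proposal is correct and follows essentially the same route as the paper: choose near-optimal $u_i\in\adm(E_i)$ with error $\ep 2^{-i}$, form the partial maxima $v_k$ converging in $\Lp(\Omega)$ to $\sup_i u_i$ by dominated convergence, iterate Lemma \ref{hilalemma} to bound $\modubvpp(v_k)$ by $\sum_{i\le k}\modubvpp(u_i)$, and pass to the limit with Theorem \ref{puolijatku}. The only cosmetic difference is that you spell out the dominated-convergence domination and the pointwise lattice identity for $\modup$ more explicitly than the paper does.
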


\begin{proof}
Clearly $\kapbvp(\emptyset)=0$ and $E_{1}\subset E_{2}$ implies that
$\adm(E_{2})\subset\adm(E_{1})$, hence $\kapbvp(E_{1})\leq\kapbvp(E_{2})$. To prove the subadditivity we may assume that
\[\sum\limits_{i=1}^{\infty}\kapbvp(E_{i})<\infty.\]
We let $\ep>0$ and for every index $i=1,2,\ldots$ choose functions $u_{i}\in\adm(E_{i})$ such that
\[\modup(u_{i})+\modubvpp(u_{i})\leq\kapbvp(E_{i})+\ep 2^{-i}.\]
Let
\[u := \sup\limits_{1\leq i<\infty}u_{i}\]
and notice that
\[\modup(u)\leq\sum\limits_{i=1}^{\infty}\modup(u_{i})<\infty.\]
Hence $u\in\Lp(\Omega)$. We define
\[v_{j} := \max\limits_{1\leq i\leq j}u_{i}\]
and notice that $v_{j}\to u$ in $\Lp(\Omega)$ as $j\to\infty$ by dominated convergence. Therefore, by using Theorem \ref{puolijatku} and Lemma \ref{hilalemma} we obtain that
\begin{align*}\modup(u)+\modubvpp(u)&\leq\sum\limits_{i=1}^{\infty}\modup(u_{i})+\liminf\limits_{j\to\infty}\modubvpp(v_{j})\\
&\leq\sum\limits_{i=1}^{\infty}\modup(u_{i})+\liminf\limits_{j\to\infty}\sum\limits_{i=1}^{j}\modubvpp(u_{i})\\
&=\sum\limits_{i=1}^{\infty}\modup(u_{i})+\sum\limits_{i=1}^{\infty}\modubvpp(u_{i})\\
&\leq\sum\limits_{i=1}^{\infty}\kapbvp(E_{i})+\ep.
\end{align*}
Clearly $u\in\adm\big(\bigcup\limits_{i=1}^{\infty}E_{i}\big)$ and hence
\[\kapbvp\Big(\bigcup\limits_{i=1}^{\infty}E_{i}\Big)\leq\sum\limits_{i=1}^{\infty}\kapbvp(E_{i}).\qedhere\]
\end{proof}


The capacity behaves well for increasing sequence of sets. Note that this
property is not known for the variable exponent Sobolev capacity defined
in \cite{HarHKV03} in the case $p^{-} = 1$.

\begin{teoreema} \label{theor_increasingsets}
Let $E_{1}\subset E_{2}\subset\ldots\subset E_{i}\subset E_{i+1}\subset\ldots\subset\Omega$ be an increasing sequence of sets. Then
\[\kapbvp\Big(\bigcup\limits_{i=1}^{\infty}E_{i}\Big)=\lim\limits_{i\to\infty}\kapbvp(E_{i}).\]
\end{teoreema}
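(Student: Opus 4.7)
The plan is to prove the nontrivial inequality $\kapbvp(\bigcup E_i) \leq \lim_{i\to\infty} \kapbvp(E_i)$, since the reverse direction follows at once from the monotonicity already observed in Theorem \ref{theor_outermeasure}. I may assume the right hand side is finite.

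The strategy is the classical Choquet-style construction: fix $\ep > 0$, pick near-minimizers $u_i \in \adm(E_i)$ with $\modup(u_i) + \modubvpp(u_i) \leq \kapbvp(E_i) + \ep 2^{-i}$, and glue them into a single admissible function for $\bigcup E_i$. The natural candidate is $v := \sup_i u_i$, obtained as the pointwise increasing limit of the truncated maxima $v_n := \max_{1 \leq i \leq n} u_i$. Since each $u_i$ equals $1$ on an open set containing $E_i$ and the $E_i$ are nested, $v_n = 1$ on an open neighbourhood of $E_n$, and $v = 1$ on the union of these neighbourhoods, which is an open neighbourhood of $\bigcup E_i$. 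Thus $v \in \adm(\bigcup E_i)$ provided $v \in \BVpp(\Omega)$.

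The crucial step is to show that the total energy $F(v_n) := \modup(v_n) + \modubvpp(v_n)$ stays bounded by $\kapbvp(E_n) + O(\ep)$. For this I will exploit the lattice estimate Lemma \ref{hilalemma} together with the pointwise identity $(\max(a,b))^{p(x)} + (\min(a,b))^{p(x)} = a^{p(x)} + b^{p(x)}$ for $a,b \geq 0$, which gives the same submodular inequality for $\modup$. Writing $v_{n+1} = \max(v_n, u_{n+1})$ and noting that $\min(v_n, u_{n+1}) \in \adm(E_n)$ (since both factors equal $1$ on a neighbourhood of $E_n$), I get
\begin{align*}
    F(v_{n+1})
    \leq & F(v_n) + F(u_{n+1}) - F(\min(v_n, u_{n+1})) \\
    \leq & F(v_n) + \bigl(\kapbvp(E_{n+1}) + \ep 2^{-(n+1)}\bigr) - \kapbvp(E_n) .
\end{align*}
Summing this telescopic inequality from $n = 1$ yields $F(v_n) \leq \kapbvp(E_n) + C\ep$ uniformly in $n$.

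To close the argument I must pass to the limit. Since $0 \leq v_n \nearrow v$ and $\modup(v_n)$ is bounded, dominated convergence shows $v \in \Lp(\Omega)$ and $v_n \to v$ in $\Lp(\Omega)$. The lower semicontinuity Theorem \ref{puolijatku} then gives $\modubvpp(v) \leq \liminf_n \modubvpp(v_n)$, while Fatou handles $\modup$, so $F(v) \leq \liminf_n F(v_n) \leq \lim_i \kapbvp(E_i) + C\ep$. Using $v$ as a competitor for $\bigcup E_i$ and letting $\ep \to 0$ finishes the proof. The main obstacle is the telescoping energy estimate: one could naively try $v := \sup u_i$ directly, but then only the subadditive bound $F(v) \leq \sum F(u_i)$ is immediate, which is far too weak. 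The lattice inequality of Lemma \ref{hilalemma} is exactly what converts the sum into a telescoping difference and yields the sharp bound $\lim \kapbvp(E_i)$.
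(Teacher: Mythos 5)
Your proposal is correct and follows essentially the same route as the paper: the same choice of near-minimizers with errors $\ep 2^{-i}$, the same truncated maxima $v_n$, the same use of $\min(v_{n-1},u_n)\in\adm(E_{n-1})$ together with Lemma \ref{hilalemma} to obtain the telescoping energy bound, and the same passage to the limit via monotone/dominated convergence and Theorem \ref{puolijatku}. Your explicit remark that the pointwise identity $\max(a,b)^{p(x)}+\min(a,b)^{p(x)}=a^{p(x)}+b^{p(x)}$ supplies the lattice inequality for $\modup$ is a point the paper leaves implicit.
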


\begin{proof}By monotonicity
\[\lim\limits_{i\to\infty}\kapbvp(E_{i})\leq\kapbvp\Big(\bigcup\limits_{i=1}^{\infty}E_{i}\Big).\]
In order to prove the opposite inequality we may assume that
\[\lim\limits_{i\to\infty}\kapbvp(E_{i})<\infty.\]
For every index $i=1,2,\ldots$ choose function $u_{i}\in\adm(E_{i})$ such that
\[\modup(u_{i})+\modubvpp(u_{i})<\kapbvp(E_{i})+\ep 2^{-i}.\]
Let
\begin{align*}v_{i} & := \max\big\{u_{1},\ldots,u_{i}\big\} = \max\big\{v_{i-1},u_{i}\big\}\\
w_{i} & := \min\big\{v_{i-1},u_{i}\big\},
\end{align*}
and notice that $v_{i},w_{i}\in\BVpp(\Omega)$ for every index $i=1,2,\ldots$ and
\[E_{i-1}\subset\text{int}\big\{w_{i}\geq 1\big\}.\]
We define $E_{0}=\emptyset$ and $v_{0}\equiv 0$ and by using Lemma \ref{hilalemma} we obtain
    \begin{align*}
        \modup(v_{i}) + & \modubvpp(v_{i}) + \kapbvp(E_{i-1}) \\
        \leq & \vphantom{\int_\Omega} \;\; \modup(v_{i}) \; + \; \modubvpp(v_{i}) \;
                    + \; \modup(w_{i}) \; + \; \modubvpp(w_{i}) \\
        = & \;\; \modup(\max\{v_{i-1},u_{i}\}) \;
                 + \; \modubvpp(\max\{v_{i-1},u_{i}\}) \\
             & \vphantom{\int_\Omega} + \; \modup(\min\{v_{i-1},u_{i}\}) \;
                 + \; \modubvpp(\min\{v_{i-1},u_{i}\}) \\
        \leq &  \;\; \modup(v_{i-1}) \; + \; \modubvpp(v_{i-1}) \;
                     + \; \modup(u_{i}) \; + \; \modubvpp(u_{i}) \\
        \leq & \vphantom{\int_\Omega} \;\; \modup(v_{i-1}) \; + \; \modubvpp(v_{i-1}) \;
                    + \; \kapbvp(E_{i}) \; + \; \ep2^{-i} .
    \end{align*}
Thus, by adding these inequalities consecutively up to index $i$, we see
a telescope sum and obtain
\[\modup(v_{i})+\modubvpp(v_{i})\leq\kapbvp(E_{i})+\sum\limits_{j=1}^{i}\ep2^{-j}.\]
We define a function
\[v := \lim\limits_{i\to\infty}v_{i}\]
and by monotone convergence we obtain that
\[ \modup(v) = \lim\limits_{i\to\infty}\modup(v_{i})\leq\lim\limits_{i\to\infty}\kapbvp(E_{i}) + \ep . \]
We note that $v_i \to v$ in $\Lp(\Omega)$ by dominated convergence,
and by using Theorem \ref{puolijatku} we have that
\[\modubvpp(v)\leq\liminf\limits_{i\to\infty}\modubvpp(v_{i})\leq\lim\limits_{i\to\infty}\kapbvp(E_{i})+\ep.\]
Thus $v \in \adm \left( \bigcup\limits_{i=1}^{\infty}E_{i} \right)$, and we have the estimate
    \begin{align*}
        \kapbvp \left( \bigcup\limits_{i=1}^{\infty}E_{i} \right)
        & \leq \modup(v) + \modubvpp(v) \\
        & \leq \liminf\limits_{i\to\infty} \modup(v_{i})
          + \liminf\limits_{i\to\infty} \modubvpp(v_{i}) \\
        & \leq \liminf\limits_{i\to\infty} \left( \modup(v_{i}) + \modubvpp(v_{i}) \right) \\
        & \leq \lim\limits_{i\to\infty} \kapbvp(E_{i}) + \ep .
    \end{align*}
Letting $\ep \to 0$ completes the proof.
\end{proof}

The following theorem states that $\kapbvp(\cdot)$ is an outer capacity.
This theorem actually does not depend on the properties in Section 4.
This is a general property of almost all capacities that are defined in a
similar way; the important point in the definition is $u \equiv 1$
in an open neighbourhood.

\begin{teoreema} \label{ulkokap}
    For any $E\subset \Omega$ we have
    \[
       \kapbvp(E) =
       \inf \left\{ \kapbvp(U) \; : \; E \subset U \subset \Omega , \; U \text{ an open set } \right\} .
    \]
\end{teoreema}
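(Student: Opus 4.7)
The plan is to prove the two inequalities separately, with the forward direction being immediate from monotonicity and the reverse direction following directly from the structure of the admissibility class.

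First, the inequality
\[
    \kapbvp(E) \leq \inf \left\{ \kapbvp(U) \; : \; E \subset U \subset \Omega , \; U \text{ open} \right\}
\]
is immediate: if $U$ is any open set with $E \subset U$, then $\adm(U) \subset \adm(E)$ (a function that equals $1$ on an open neighbourhood of $U$ automatically equals $1$ on an open neighbourhood of $E$), so $\kapbvp(E) \leq \kapbvp(U)$, and taking the infimum over such $U$ gives the bound. This is essentially the monotonicity already used in Theorem \ref{theor_outermeasure}.

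For the reverse direction, I would fix $\ep > 0$ and exploit the definition of $\adm(E)$. Choose a near-minimizer $u \in \adm(E)$ with
\[
    \modup(u) + \modubvpp(u) \leq \kapbvp(E) + \ep .
\]
By the very definition of admissibility, there exists an open set $U$ with $E \subset U \subset \Omega$ on which $u \equiv 1$. Since $U$ itself is an open neighbourhood of $U$, the same function $u$ belongs to $\adm(U)$, and therefore
\[
    \kapbvp(U) \leq \modup(u) + \modubvpp(u) \leq \kapbvp(E) + \ep .
\]
Hence the infimum over open supersets is at most $\kapbvp(E) + \ep$. Letting $\ep \to 0$ finishes the proof.

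There is really no obstacle here; the argument is a one-line consequence of the fact that admissible functions are required to equal $1$ on an \emph{open} neighbourhood rather than merely on the set itself. As the statement of the theorem already foreshadows, no input from Section 4 (semicontinuity, lattice properties, convergence arguments) is needed. The only thing to be slightly careful about is the trivial case $\kapbvp(E) = \infty$, where the inequality holds vacuously.
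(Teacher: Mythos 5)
Your proof is correct and follows essentially the same route as the paper: monotonicity gives one inequality, and for the other one extracts from a near-minimizer $u\in\adm(E)$ the open set $U$ on which $u\equiv 1$ and observes that $u\in\adm(U)$. The handling of the trivial case $\kapbvp(E)=\infty$ matches the paper's reduction as well.
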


\begin{proof}
    By monotonicity
    \[
        \kapbvp(E) \leq
        \inf \left\{ \kapbvp(U) \; : \; E \subset U \subset \Omega , \; U \text{ an open set } \right\} .
    \]
We can assume that $\kapbvp(E)<\infty$. Let $\ep>0$ and take $u\in\adm(E)$ such that
\[\modup(u)+\modubvpp(u)<\kapbvp(E)+\ep.\]
Since $u\in\adm(E)$ there is an open set $U$, $E\subset U\subset \Omega$ such that $u=1$ on $U$, which implies
\[\kapbvp(U)\leq\modup(u)+\modubvpp(u)<\kapbvp(E)+\ep.\]
Hence
    \[
        \inf \left\{ \kapbvp(U) \; : \; E \subset U \subset \Omega , \; U \text{ an open set } \right\}
        \leq \kapbvp(E) . \qedhere
    \]
\end{proof}

The capacity behaves well for decreasing sequence of compact sets.
This is also a general property not depending on tools from Section 4.

\begin{teoreema} \label{laskekap} If $K_{1}\supset\ldots \supset K_{i}\supset K_{i+1}\supset\ldots$ are compact subsets of $\Omega$ and $K=\bigcap\limits_{i=1}^{\infty}K_{i}$, then
\[\kapbvp(K)=\lim\limits_{i\to\infty}\kapbvp(K_{i}).\]
\end{teoreema}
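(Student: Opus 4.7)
The plan is to prove the two inequalities separately, the easier one by monotonicity and the harder direction by combining the outer capacity property (Theorem \ref{ulkokap}) with a compactness argument. No results from Section 4 are needed; this is a purely set-theoretic/topological argument on top of monotonicity and outer regularity.

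First, since $K \subset K_i$ for every $i$, monotonicity of $\kapbvp$ (established in Theorem \ref{theor_outermeasure}) immediately yields
\[
    \kapbvp(K) \leq \lim_{i \to \infty} \kapbvp(K_i) ,
\]
where the limit exists because the sequence $\kapbvp(K_i)$ is monotone decreasing and bounded below by $0$.

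For the reverse inequality, fix $\ep > 0$. By Theorem \ref{ulkokap}, $\kapbvp$ is an outer capacity, so there exists an open set $U$ with $K \subset U \subset \Omega$ such that
\[
    \kapbvp(U) < \kapbvp(K) + \ep .
\]
Now I would invoke a standard compactness fact: since the $K_i$ are a decreasing family of compact sets with $\bigcap_i K_i = K \subset U$, the sets $K_i \setminus U$ form a decreasing family of compact subsets of the compact set $K_1$ whose intersection is empty. By the finite intersection property (applied in reverse), there exists an index $i_0$ such that $K_{i_0} \setminus U = \emptyset$, i.e.\ $K_{i_0} \subset U$. Consequently $K_i \subset U$ for every $i \geq i_0$, and monotonicity gives $\kapbvp(K_i) \leq \kapbvp(U)$ for all such $i$.

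Combining these,
\[
    \lim_{i \to \infty} \kapbvp(K_i) \leq \kapbvp(U) < \kapbvp(K) + \ep ,
\]
and letting $\ep \to 0$ closes the argument. The only mildly delicate point is the compactness step showing $K_i \subset U$ for large $i$; everything else is immediate from monotonicity and outer regularity.
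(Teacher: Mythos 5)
Your proof is correct and follows essentially the same route as the paper: monotonicity for one inequality, and for the other the outer capacity property of Theorem \ref{ulkokap} combined with the fact that $K_i \subset U$ for large $i$ whenever $U$ is open and contains $K$. You merely spell out the compactness step (via the finite intersection property applied to the compact sets $K_i \setminus U$) in more detail than the paper does, and phrase the conclusion with an $\ep$ instead of taking an infimum over open neighbourhoods; these are cosmetic differences.
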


\begin{proof} By monotonicity
\[\lim\limits_{i\to\infty}\kapbvp(K_{i})\geq\kapbvp(K).\]
Let $U\subset\Omega$ be an open set containing $K$. Now by the compactness of $K$, $K_{i}\subset U$ for all sufficiently large $i$. 
Therefore
\[\lim\limits_{i\to\infty}\kapbvp(K_{i})\leq\kapbvp(U)\]
and since $\kapbvp(\cdot )$ is an outer capacity, see Theorem \ref{ulkokap}, we obtain the claim by taking infimum over all open sets $U$ containing $K$.
\end{proof}

The mixed capacity satisfies the following strong subadditivity property. 

\begin{teoreema} \label{vahsub} If $E_{1},E_{2}\subset \Omega$, then 
\[\kapbvp(E_{1}\cup E_{2})+\kapbvp(E_{1}\cap E_{2})\leq \kapbvp(E_{1})+\kapbvp(E_{2}).\]
\end{teoreema}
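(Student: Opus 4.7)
The plan is to follow the standard template for strong subadditivity of lattice-type capacities: choose near-minimizing admissible functions for $E_1$ and $E_2$ separately, then show that their pointwise maximum and minimum are admissible for $E_1 \cup E_2$ and $E_1 \cap E_2$ respectively, and apply a lattice inequality for the total cost $\modup + \modubvpp$.

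First, I would assume $\kapbvp(E_1), \kapbvp(E_2) < \infty$, since otherwise the inequality is trivial. Fix $\ep > 0$ and pick $u_j \in \adm(E_j)$, $j = 1,2$, with
\[
    \modup(u_j) + \modubvpp(u_j) \leq \kapbvp(E_j) + \ep.
\]
Set $v := \max\{u_1, u_2\}$ and $w := \min\{u_1, u_2\}$. Both functions lie in $\BVpp(\Omega)$ (the maxima and minima of two admissible functions are admissible: $0 \leq w \leq v \leq 1$), and since $u_j = 1$ on an open set $V_j \supset E_j$, we have $v = 1$ on the open set $V_1 \cup V_2 \supset E_1 \cup E_2$, while $w = 1$ on the open set $V_1 \cap V_2 \supset E_1 \cap E_2$. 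Thus $v \in \adm(E_1 \cup E_2)$ and $w \in \adm(E_1 \cap E_2)$.

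The key step is the pointwise identity
\[
    |v(x)|^{p(x)} + |w(x)|^{p(x)} = |u_1(x)|^{p(x)} + |u_2(x)|^{p(x)},
\]
which holds because at every point $\{v(x), w(x)\} = \{u_1(x), u_2(x)\}$ as a multiset. Integrating over $\Omega$ yields
\[
    \modup(v) + \modup(w) = \modup(u_1) + \modup(u_2).
\]
For the gradient part we invoke Lemma \ref{hilalemma} to get
\[
    \modubvpp(v) + \modubvpp(w) \leq \modubvpp(u_1) + \modubvpp(u_2).
\]

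Combining these two estimates with the definition of capacity and the admissibility of $v$ and $w$, we obtain
\begin{align*}
    \kapbvp(E_1 \cup E_2) + \kapbvp(E_1 \cap E_2)
    & \leq \bigl( \modup(v) + \modubvpp(v) \bigr) + \bigl( \modup(w) + \modubvpp(w) \bigr) \\
    & \leq \bigl( \modup(u_1) + \modubvpp(u_1) \bigr) + \bigl( \modup(u_2) + \modubvpp(u_2) \bigr) \\
    & \leq \kapbvp(E_1) + \kapbvp(E_2) + 2\ep.
\end{align*}
Letting $\ep \to 0$ finishes the proof.

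The argument is essentially routine once the two preceding ingredients are in place; the only substantive point is Lemma \ref{hilalemma}, which has already been established. No delicate step is expected — admissibility of $v$ and $w$ is straightforward from the openness of the sets on which $u_j = 1$, and the lattice identity for $\modup$ is pointwise.
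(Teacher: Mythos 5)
Your proposal is correct and follows essentially the same route as the paper: near-minimizers for $E_1$ and $E_2$, admissibility of the pointwise max and min for the union and intersection, the pointwise multiset identity for $\modup$, and Lemma \ref{hilalemma} for $\modubvpp$. The only cosmetic difference is that the paper takes errors $\ep/2$ for each set so the final bound carries $\ep$ rather than $2\ep$; this is immaterial.
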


\begin{proof}
We can assume that $\kapbvp(E_{1})+\kapbvp(E_{2})<\infty$. Let $\ep>0$ and $u_{1}\in\adm(E_{1})$ and $u_{2}\in\adm(E_{2})$ be such that
\begin{align*}
\modup(u_1)+\modubvpp(u_1)&<\kapbvp(E_{1})+\frac{\ep}{2},\\
\modup(u_2)+\modubvpp(u_2)&<\kapbvp(E_2)+\frac{\ep}{2}.
\end{align*}
We see that
    \begin{align*}
        & \max\{u_1,u_2\} \in \adm(E_1\cup E_2) , \\
        & \min\{u_1,u_2\}\in\adm(E_1\cap E_2) .
    \end{align*}
Therefore, by Theorem \ref{hilalemma}, we obtain
    \begin{align*}
        \kapbvp(E_1&\cup E_2) + \kapbvp(E_1\cap E_2) \\
        \leq & \;\; \vphantom{\int_\Omega} \modup(\max\{u_1,u_2\}) \;
                                      + \; \modup(\min\{u_1,u_2\}) \\
                                      & + \modubvpp(\max\{u_1,u_2\}) \;
                                        + \; \modubvpp(\min\{u_1,u_2\}) \\
        \leq & \;\; \vphantom{\int_\Omega} \modup(u_1) \; + \; \modubvpp(u_1) \;
                                           + \; \modup(u_2) \; + \; \modubvpp(u_2) \\
        \leq & \;\; \kapbvp(E_1) \; + \; \kapbvp(E_2) \; + \; \ep .
    \end{align*}
Letting $\ep\to 0$, we obtain the claim.
\end{proof}

By Theorems \ref{theor_increasingsets}, \ref{ulkokap}, and \ref{laskekap}, the
mixed capacity is a Choquet capacity. An important feature is that now the
capacity of a Borel set $E$ can be estimated ''from the inside'' by a compact
set, and ''from the outside'' by an open set:
    \begin{align*}
       \kapbvp(E)
       = \; & \sup \left\{ \kapbvp(K) \; : \; K \subset E , \; K \text{ compact} \right\} \\
       = \; & \inf \left\{ \kapbvp(U) \; : \; E \subset U , \; U \text{ open} \right\} .
    \end{align*}
For the original paper on abstract capacity by Choquet,
see \cite{Cho59}.

\section{Mixed capacity and Sobolev capacity}

In this section we study the relations between $\BVpp$-capacity defined
in the previous section and the $p(\cdot)$-Sobolev capacity, see
\cite{HarHKV03}. 
Let $E\subset\Rn$ and denote 
    \[
        S_{p(\cdot)}(E) :=
        \left\{ u\in\W(\Rn) \; : \; u \geq 1 \text{ in an open set containing } E \right\} .
    \]
The $p(\cdot)$-capacity of $E\subset\Rn$ is defined as
    \[
        \kapp(E) := \inf \int_{\Rn} |u|^{p(x)} + | \nabla u |^{p(x)} \, dx ,
    \]
in other words
    \[
        \kapp(E) = \inf \moduyp(u) .
    \]
The infimum is taken over functions $u\in S_{p(\cdot)}(E)$. It is easy
to see that restricting the admissible functions to the case
$0 \leq u \leq 1$ yields the same infimum. In this case, it is obviously
possible to drop the absolute value from $|u|$.

The capacity $\kapp(\cdot)$
has many favourable properties: it is monotone, it is an outer capacity,
it is finitely strongly subadditive, it has the compact set intersection property,
it is subadditive for null sets. See \cite[Theorem 3.1, Lemma 3.5]{HarHKV03}.
The Sobolev capacity has some open issues as well. In the case
$p^{-} = 1$, it is not known whether the Sobolev capacity is in general
subadditive or whether it has the increasing set union property.
Note that we have been able to solve these for the mixed capacity
in Theorems \ref{theor_outermeasure}, \ref{theor_increasingsets}.

The next theorem shows that $\kapp(\cdot)$ and $\kapbvp(\cdot)$ have the same null sets, if the variable exponent $p(\cdot)$ is bounded and log-H{\" o}lder continuous.

%
%

\begin{teoreema}
Let $p$ be a log-H{\" o}lder continuous variable exponent with $p^{+}<\infty$ and let $E\subset \Rn$. Then $\kapp(E)=0$ if and only if
$\kapbvp(E)=0$.
\end{teoreema}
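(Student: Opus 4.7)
The plan is to argue the two directions separately: Direction 1 (Sobolev-null implies mixed-null) is a short density argument, while Direction 2 (mixed-null implies Sobolev-null) is the substantive part and relies on mollification controlled by Theorem \ref{variant_of_HarHL08_46}. For Direction 1, given $u \in S_{p(\cdot)}(E)$ with $0 \leq u \leq 1$ and $\moduyp(u) < \ep$, log-H\"older continuity of $p$ makes the mollifications $u \ast \varphi_\delta$ smooth (hence locally Lipschitz) elements of $\Lp(\Rn)$ that converge to $u$ in $\Lp$ and satisfy $\int_{\Rn} |\nabla(u \ast \varphi_\delta)|^{p(x)}\,dx \to \int_{\Rn} |\nabla u|^{p(x)}\,dx$. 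By definition, $\modubvpp(u) \leq \int_{\Rn} |\nabla u|^{p(x)}\,dx$, and since $u$ is automatically mixed-admissible for $E$, $\kapbvp(E) \leq \modup(u) + \modubvpp(u) \leq \moduyp(u) < \ep$; letting $\ep \to 0$ completes this direction.

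For Direction 2, I first establish the compact case. Let $K$ be compact with $\kapbvp(K) = 0$ and fix $\ep > 0$. Pick $u \in \adm(K)$ with $\modup(u) + \modubvpp(u) < \ep$, so $u = 1$ on some open $V \supset K$ with $r := d(K, \Rn \setminus V) > 0$ by compactness. Multiplying $u$ by a smooth compactly supported cutoff $\eta$ equal to $1$ on a bounded neighborhood of $K$, I obtain $\tilde u = u\eta \in \BVpp(\Rn)$ of compact support still equal to $1$ on an open neighborhood of $K$; standard product-rule estimates on any $\Lip$-approximating sequence for $u$ yield $\modup(\tilde u) + \modubvpp(\tilde u) \leq C\ep$. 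Now I mollify: $\tilde u_\delta := \tilde u \ast \varphi_\delta$ is smooth with compact support, hence in $\W(\Rn)$, and for $\delta < r/2$ it equals $1$ on an open neighborhood of $K$, making it admissible for $\kapp(K)$. Its $\Lp$-part converges: $\int_{\Rn} |\tilde u_\delta|^{p(x)}\,dx \to \int_{\Rn} |\tilde u|^{p(x)}\,dx$ by log-H\"older. For the gradient, I apply Theorem \ref{variant_of_HarHL08_46} with a bounded $\Omega$ and closed $F \subset \Omega$ containing the supports: $\limsup_{\delta \to 0}\varrho_{\BVp(F)}(\tilde u_\delta) \leq C\varrho_{\BVp(F)}(\tilde u)$. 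Since $\tilde u_\delta$ is smooth, the left-hand side equals $\int_F |\nabla \tilde u_\delta|^{p(x)}\,dx$, and by Theorem \ref{modular_equivalence} the right-hand side is at most $C\modubvpp(\tilde u) \leq C'\ep$. Combining everything gives $\moduyp(\tilde u_\delta) \leq C''\ep$, so $\kapp(K) \leq C''\ep$; letting $\ep \to 0$ yields $\kapp(K) = 0$.

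To extend from compact $K$ to arbitrary $E$, I use outer regularity of $\kapbvp$ (Theorem \ref{ulkokap}) to produce open $U_n \supset E$ with $\kapbvp(U_n) \to 0$; then $F := \bigcap_n U_n$ is Borel, $F \supset E$, and $\kapbvp(F) = 0$. By the Choquet property of $\kapbvp$ established in Section 5, every compact $K \subset F$ satisfies $\kapbvp(K) = 0$, whence $\kapp(K) = 0$ by the compact case. The conclusion $\kapp(E) = 0$ then follows from subadditivity of $\kapp$ on null sets, see \cite[Lemma 3.5]{HarHKV03}, combined with inner regularity of the Borel set $F$ by compact subsets. The main technical obstacle throughout Direction 2 is that a mollification of a $\BVpp$-admissible function retains the value $1$ only at positive distance from $\partial V$; this is why the reduction to compact subsets, where such a positive distance is automatic, is the crux of the argument.
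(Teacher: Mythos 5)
Your first direction and your compact-set argument are fine; in fact, for a \emph{compact} $K$ the mollification route via Theorem \ref{variant_of_HarHL08_46} together with Theorems \ref{ekvi} and \ref{modular_equivalence} is a clean and genuinely simpler alternative to what the paper does, because compactness guarantees that $\tilde u_\delta \equiv 1$ on a neighbourhood of $K$ for small $\delta$ and the identity $\varrho_{\BVp(F)}(\tilde u_\delta)=\int_F|\nabla \tilde u_\delta|^{p(x)}\,dx$ for smooth $\tilde u_\delta$ is correct.

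The gap is in the passage from compact sets to arbitrary $E$. From $\kapbvp(F)=0$ you correctly get $\kapp(K)=0$ for every compact $K\subset F$, but to conclude $\kapp(E)=0$ you invoke ``inner regularity of the Borel set $F$ by compact subsets'' together with subadditivity of $\kapp$ on null sets. Inner regularity here would have to be inner regularity of the \emph{Sobolev} capacity $\kapp$, i.e.\ $\kapp(F)=\sup\{\kapp(K):K\subset F\text{ compact}\}$ — and that is precisely the Choquet/capacitability property of $\kapp$, which is an open problem when $p^{-}=1$ (the paper states this explicitly, and avoiding it is one of the motivations for introducing $\kapbvp$). The Choquet property established in Section 5 is for $\kapbvp$, and it only reproduces $\kapbvp(F)=0$, which you already know. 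Nor can you repair this by writing a neighbourhood of $E$ as a countable union of compacts: the set $F=\bigcap_n U_n$ is a $G_\delta$, not $\sigma$-compact in general, and the subadditivity of $\kapp$ from \cite[Lemma 3.5]{HarHKV03} applies only to countable unions of sets that are already $\kapp$-null; countable subadditivity and the increasing-set property of $\kapp$ are likewise open for $p^{-}=1$. This is exactly why the paper does \emph{not} reduce to compact subsets: it takes an arbitrary $u\in\adm(E)$, passes via the coarea formula to a level set $E_{t_0}\supset\operatorname{int}\{u=1\}\cap B_\rho\supset E$ with small measure plus perimeter, covers its density points by balls using a boxing inequality, and glues ball-supported test functions (Lipschitz tents where $p^{-}_{10B_i}=1$, genuine Sobolev pieces where $p^{-}_{10B_i}>1$) into a single $\varphi\in\W(\Rn)$ with $\varphi=1$ on an open set containing all of $E$ and $\moduyp(\varphi)<C\ep$. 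That covering construction is the substitute for the compactness reduction, and some argument of that kind is needed to close your proof for general $E$.
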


\begin{proof}
Since $\kapbvp(\cdot)$ is an outer measure and $\kapp(\cdot)$ is monotone and subadditive for null sets, we may assume that $E$ is bounded. Assume first that
$\kapbvp(E)=0$. Let $0 < \ep < 1$ and 
take $u\in \adm(E)$ such that
\[ \int_{\Rn}u^{p(x)}dx+\varrho_{\BVpp(\Rn)}(u) < \ep^{p^{+}} . \]
Denote $B_{\rho}=B(0,\rho)$ and let $\rho>0$ be such that $E\subset B_{\rho}$. 
Let $\eta$ be a Lipschitz function such that $0\leq \eta\leq 1$,
$|\nabla \eta| \leq 1$, $\eta=1$ in $B_{\rho}$ and $\eta=0$ in $\Rn\setminus B_{\rho+1}$.
Denote $v := \eta u$. Now $v\in\BVpp(B_{\rho+2})$ and
\[ \int_{B_{\rho+2}}v^{p(x)}dx+\varrho_{\BVpp(B_{\rho+2})}(v)< C \ep^{p^{+}} . \]
We may now assume that $C \ep^{p^{+}} < 1$ by choosing a new, smaller $\ep$ if necessary.
Theorem \ref{ekvi} together with its proof and and H{\" o}lder's inequality implies
that $v\in\BVp(B_{\rho+2})$ and
\[ \int_{B_{\rho+2}}v\,dx+\|Dv\|(B_{\rho+2}) \leq C \ep + C \ep \leq C \ep , \]
where the constant $C$ depends only on $\rho$ and $p^{+}$. From the
Cavalieri principle \cite[Lemma 1.5.1]{Zie89} and coarea formula for
BV-functions \cite[Theorem 5.4.4]{Zie89} we deduce that
    \[
        \left| \left\{ v>t_{0} \right\} \cap B_{\rho+2} \right|
        + P( \left\{ v>t_{0} \right \} , B_{\rho+2} )
        \leq \int_{B_{\rho+2}} v \, dx + \|Dv\|(B_{\rho+2}) ,
    \]
for some $0<t_{0}<1$. Denote $E_{t_{0}} := \{v>t_{0}\}\cap B_{\rho+2}$. Since $E_{t_{0}}\subset\subset B_{\rho+2}$ we have that $P(E_{t_{0}},B_{\rho+2})=P(E_{t_{0}},\Rn)$;
see \cite[Remark 1.7]{Giu84}. Denote the set of Lebesgue
density points
    \[
        E_{t_0}^\ast
        := \left\{ x \in E_{t_0} \; : \; \lim\limits_{r \to 0} \frac{ |B(x,r) \cap E_{t_0}| }{ |B(x,r)| } = 1 \right\} .
    \]
It is known that almost every point of a measurable set is a Lebesgue
density point. We apply the modified Boxing inequality
\cite[Lemma 4.2]{HakK10} for the radius $R=1$ to obtain a covering
for $E_{t_{0}}^\ast$. We shall have
    \[ E_{t_{0}}^\ast \subset \bigcup\limits_{i=1}^{\infty}B(x_{i},5r_{i}) \]
and
    \[
        \sum\limits_{i\in I_{1}} | B(x_{i},5 r_{i}) |
        + \sum\limits_{i\in I_{2}}\frac{ |B(x_{i},5 r_{i}) | }{ 5 r_{i} }
        \leq C \left( |E_{t_0}| + P(E_{t_0},\Rn) \right).\]

Denote by $I_{3}$ indices $i\in I_{1}\cup I_{2}$ such that
$p_{10 B_{i}}^{-}>1$ where $B_{i}=B(x_{i},r_{i})$. If we have some
ball $B$, we futher denote by $\kappa B$ the cocentric ball
with the radius of the original ball $B$ scaled by some
constant $\kappa > 0$. For indices $i\in (I_{1}\cup I_{2})\setminus I_{3}$
define
\[\varphi_{i}(x) := \left( 1-\frac{\text{dist}(x,B(x_{i},5r_{i}))}{5 r_{i}} \right)^{+}.\]
For indices $i\in I_{3}$ we choose function $u_{i}\in \adm(E)$ such that
\[\frac{1}{r_{i}^{p^{+}}}\int_{B(x_{i},10r_{i})}u_{i}^{p(x)}+|\nabla u_{i}|^{p(x)}dx<2^{-i}\ep,\]
here the assumption $p_{10 B_{i}}^{-}>1$ ensures that $u_{i}\in\W(10B_{i})$. 
For $i\in I_{3}$ let
\[\varphi_{i}:=\eta_{i}u_{i},\]
where $\eta_{i}$ is $1/5r_{i}$-Lipschitz function such that $0\leq \eta_{i}\leq 1$, $\eta_{i}=1$ in $B(x_{i},5r_{i})$ and $\eta_{i}=0$ in $\Rn\setminus B(x_{i},10r_{i})$.
 
We have now defined $\varphi_i$ for all indices $i$. Let
\[\varphi := \sup\limits_{1\leq i<\infty}\varphi_{i}\]
and
\[g_{\varphi} := \sup\limits_{1\leq i<\infty}|\nabla\varphi_{i}|.\]
%
%
We now have
    \begin{align*}
        \int_{\Rn} g_{\varphi}^{p(x)} dx
        & \leq \sum\limits_{i=1}^{\infty}
               \int_{10B_{i}} |\nabla\varphi_{i}|^{p(x)} dx \\
        & = \sum\limits_{i\in (I_{1}\cup I_{2})\setminus I_{3}}
            \int_{10B_{i}} |\nabla\varphi_{i}|^{p(x)} dx
            + \sum\limits_{i\in I_{3}}
              \int_{10B_{i}} |\nabla\varphi_{i}|^{p(x)} dx .
    \end{align*}
Since $p_{10 B_{i}}^{-}=1$ for every $i\in(I_{1}\cup I_{2})\setminus I_{3}$,
we write 
   %
   %
    \begin{align*}
        \sum\limits_{i\in (I_{1}\cup I_{2})\setminus I_{3}} 
        \int_{10B_{i}}|\nabla\varphi_{i}|^{p(x)}dx
        & \leq \sum\limits_{i\in (I_{1}\cup I_{2})\setminus I_{3}}
               \int_{10B_{i}} \left(\frac{1}{5r_{i}} \right)^{p(x)} dx \\
        & = \sum\limits_{i\in (I_{1}\cup I_{2})\setminus I_{3}}
            \int_{10B_{i}} \left( 5 r_i \right)^{ - (p(x)-1) }
                           \frac{1}{5r_{i}} dx \\
        & \leq C \sum\limits_{i\in (I_{1}\cup I_{2})\setminus I_{3}}
                 \int_{10B_{i}}\frac{1}{5r_{i}} dx .
    \end{align*}
Here we used the property of log-H{\" o}lder continuous exponent $p$:
    \[
        r^{ - (p(x)-1) } \leq C .
    \]
Note that always $p(x) \leq p^{+}_{10B_i}$ and now $p^{-}_{10B_i} = 1$.
We then see that
    \begin{align*}
        C \sum\limits_{i\in (I_{1}\cup I_{2})\setminus I_{3}}
          \int_{10B_{i}}\frac{1}{5r_{i}} dx
        & \leq C \sum\limits_{i\in (I_{1}\cup I_{2})\setminus I_{3}}
                 \frac{ |5B_{i}| }{5r_{i}} \\
        & \leq C \sum\limits_{i\in I_{1}\setminus I_{3}}
                 \frac{ |5B_{i}| }{5r_{i}}
               + C \sum\limits_{i\in I_{2}\setminus I_{3}}
                   \frac{ |5B_{i}| }{5r_{i}} \\
        & \leq C \sum\limits_{i\in I_{1}\setminus I_{3}} |5B_{i}|
               + C \sum\limits_{i\in I_{2}\setminus I_{3}}\frac{ |5B_{i}| }{5r_{i}}\\
        & \leq C \left( |E_{t_0}| + P(E_{t_0},\Rn) \right) \\
        & < \; C\ep.
\end{align*}
Here we used the fact that $r_{i}\geq 1/2$ for $i\in I_{1}\setminus I_{3}$. The constant $C$ depends only on $\rho$, $p^{+}$, $n$ and the constant in the log-H{\" o}lder continuity condition. On the other hand, for $I_{3}$ we have
\begin{align*}\sum\limits_{i\in I_{3}}\int_{10B_{i}}|\nabla\varphi_{i}|^{p(x)}dx&\leq 2^{p^{+}}\sum\limits_{i\in I_{3}}\int_{10B_{i}}|\nabla u_{i}|^{p(x)}+\Big(\frac{u_{i}}{5r_{i}}\Big)^{p(x)}dx\\
&\leq C\sum\limits_{i\in I_{3}}\frac{1}{r_{i}^{p^{+}}}\int_{10B_{i}}u_{i}^{p(x)}+|\nabla u_{i}|^{p(x)}dx\\
&\leq C\sum\limits_{i\in I_{3}}2^{-i}\ep<C\ep.
\end{align*}
Thus
\[ \int_{\Rn}g_{\varphi}^{p(x)}dx<C\ep . \]
%
%
Now $\varphi$ is compactly supported bounded function, so $\varphi \in L^1(\R^n) \cap L^\infty(\R^n)$.
By H{\" o}lder's inequality, also $g_\varphi \in  L^1(\R^n)$. By
\cite[Chapter 4.7.1, Lemma 2, iii.]{EvaG92} $\varphi$ now has a weak
gradient in $L^1(\R^n)$ and $|\nabla \varphi| \leq g_\varphi$ almost everywhere.
Thus $\varphi\in W^{1,p(\cdot)}(\Rn)$ with
    \[ \int_{\Rn} |\nabla\varphi|^{p(x)} dx < C \ep . \]
Furthermore
    \begin{align*}
        \int_{\Rn} \varphi^{p(x)} dx 
        & \leq \sum\limits_{i=1}^{\infty}
               \int_{10B_{i}}\varphi_{i}^{p(x)}dx \\
        & \leq \sum\limits_{i\in (I_{1}\cup I_{2})\setminus I_{3}}
               \int_{10B_{i}} \varphi_{i}^{p(x)} dx
               + \sum\limits_{i\in I_{3}}\int_{10B_{i}}
                 \varphi_{i}^{p(x)} dx \\
        & \leq C \sum\limits_{i\in (I_{1}\cup I_{2})\setminus I_{3}}
                 \frac{ |5B_{i}| }{5r_{i}}
                 + \sum\limits_{i\in I_{3}}
                   \int_{10B_{i}} u_{i}^{p(x)} dx .
        \end{align*}
Arguing similarly as earlier, we have that
\[ \sum\limits_{i\in (I_{1}\cup I_{2})\setminus I_{3}} \frac{ |5B_{i}| }{5r_{i}} < C \ep . \]
This combined with the fact that
\[\sum\limits_{i\in I_{3}}\int_{10B_{i}}u_{i}^{p(x)}dx<\ep\]
implies
\[\int_{\Rn}\varphi^{p(x)}dx<C\ep.\]
Clearly $E\subset\text{int}\{u = 1\}\cap B_{\rho}\subset E_{t_{0}}^{*}\subset\bigcup\limits_{i=1}^{\infty}B(x_{i},5r_{i})$. For indices $i\in(I_{1}\cup I_{2})\setminus I_{3}$ we have that
$\varphi=1$ in $5B_{i}$. On the other hand, for indices $i\in I_{3}$
we have that
\begin{align*}E\cap 5B_{i}&\subset\text{int}\{u_{i}=1\}\cap 5B_{i}\\
    &=\text{int}\{\varphi_{i}=1\}\cap 5B_{i}\\
    &\subset\text{int}\{\varphi=1\}\cap 5B_{i} .
\end{align*} 
Thus $E\subset\text{int}\{\varphi=1\}$ and 
\[\kapp(E)\leq\int_{\Rn}\varphi^{p(x)}+|\nabla \varphi|^{p(x)}dx<C\ep.\]
The claim now follows by letting $\ep\to 0$.

Assume then that $\kapp(E) = 0$. This converse proof is much
simpler. Since the smooth functions are dense in $\W(\Rn)$ we
have that $S_{p(\cdot)}(E)\subset\adm(E)$. By \cite[Lemma 2.6]{HarHKV03},
    \[
        \modubvpp (u) \leq \modup (\nabla u)
    \]
for Sobolev functions. Thus $\kapbvp(E)\leq \kapp(E)$ and $\kapbvp(E)=0$.
\end{proof}

In the above proof, we made use of \cite[Chapter 4.7.1, Lemma 2, iii.]{EvaG92}
when showing that the function $\varphi$ is a Sobolev function. We would
like to mention that the same result can be shown using 1-weak upper gradients,
which are familiar to people working on analysis on metric measure spaces.
One might argue that this route is more direct and shorter. However, in that
case some additional care needs to be taken when distinguishing between functions
and their precise representatives, between the Sobolev space and the Newtonian
space. See \cite{BjoB_pp09,Sha00}.

\vspace*{36pt}
\textbf{Acknowledgements.} Acknowledgements, affiliations, and funding information are to
be added later.

\textbf{Keywords:} capacity, functions of bounded variation, Sobolev spaces, variable exponent

\textbf{Mathematics subject classification 2000:} 46E35, 26A45, 28A12



\begin{thebibliography}{99}

\bibitem{AceM01} Acerbi, E.; Mingione, G.:
Regularity results for a class of functionals with non-standard growth.
\emph{Arch.~Ration.~Mech.~Anal.}~\textbf{156}:2 (2001), 121--140.

\bibitem{AlkK04} Alkhutov, Yu.A.; Krasheninnikova, O.V.:
Continuity at boundary points of solutions of quasilinear
elliptic equations with a nonstandard growth condition.
\emph{Izv.~Math.}~\textbf{68}:6 (2004), 1063--1117.

\bibitem{AmbFP06} Ambrosio, L.; Fusco, N.; Pallara, D.:
\emph{Functions of bounded variation and free discontinuity problems.}
Oxford Mathematical Monographs series. Oxford University Press, reprint 2006.

\bibitem{BjoB_pp09} Bj{\" o}rn, A.; Bj{\" o}rn, J.:
\emph{Nonlinear potential theory in metric spaces},
to appear in EMS Tracts in Mathematics, European Mathematical Society, Zurich.

\bibitem{But89} Buttazzo, G.:
\emph{Semicontinuity, relaxation and integral representation in the calculus of variations.}
Pitman Research Notes in Mathematics Series 207.
Longman Scientific \& Technical, Harlow, 1989.

\bibitem{Cho59} Choquet, G.:
Forme abstraite du t{\' e}or{\` e}me de capacitabilit{\' e}.
\emph{Ann.~Inst.~Fourier~(Grenoble)}~\textbf{9} (1959), 83--89.

\bibitem{Dal93} Dal Maso, G:
\emph{An introduction to $\Gamma$-convergence.}
Progress in Nonlinear Differential Equations and Their Applications series 8.
Birkh{\" a}user, Boston, 1993. 

\bibitem{DieHHR_pp10} Diening, L.; Harjulehto, P.; H{\" a}st{\" o}, P.; R{\r u}{\v z}i{\v c}ka, M.:
\emph{Lebesgue and Sobolev spaces with variable exponents.} Monograph in preparation.


\bibitem{DieHN04} Diening, L.; H{\" a}st{\" o}, P.; Nekvinda, A.:
\emph{Open problems in variable exponent Lebesgue and Sobolev spaces.}
FSDONA 2004 Proceedings, eds.~Drabek and R{\' a}kosn{\' \i}k.
Milovy, Czech Republic, 2004. 38--58.

%
%

\bibitem{EvaG92} Evans, L.C.; Gariepy, R.F.:
\emph{Measure theory and fine properties of functions.}
Studies in Advanced Mathematics series.
CRC Press, Boca Raton, 1992.

%
%

\bibitem{FedZ72} Federer, H.; Ziemer, W.P.:
The Lebesgue set of a function whose distribution derivatives are $p$-th power summable.
\emph{Indiana Univ.~Math.~J.}~\textbf{22} (1972/73), 139--158.

\bibitem{Giu84} Giusti, E.:
\emph{Minimal surfaces and functions of bounded variation.}
Monographs in Mathematics series 80.
Birkh{\" a}user, 1984.

%
%

\bibitem{HakK10} Hakkarainen, H.; Kinnunen, J.:
The BV-capacity in metric spaces.
\emph{Manuscripta Math.}~\textbf{132}:1--2 (2010), 51--73.

%
%

\bibitem{HarH04} Harjulehto, P.; H{\" a}st{\" o}, P.:
Lebesgue points in variable exponent spaces.
\emph{Ann.~Acad.~Sci.~Fenn.~Math.}~\textbf{29}:2 (2004), 295--306.

\bibitem{HarHKV03} Harjulehto, P.; H{\" a}st{\" o}, P.; Koskenoja, M.; Varonen, S.:
Sobolev capacity on the space $W^{1,p}(\Rn)$.
\emph{J.~Funct.~Spaces Appl.}~\textbf{1}:1 (2003), 17--33.

\bibitem{HarHL04} Harjulehto, P.; H{\" a}st{\" o}, P.; Latvala, V.:
Lebesgue points in variable exponent Sobolev spaces on metric measure spaces.
In: Complex Analysis and Free Boundary Flows,
Transactions of the Institute of Mathematics of the National Academy of Sciences of Ukraine, 2004. Pp.~87--99.

\bibitem{HarHL08} Harjulehto, P.; H{\" a}st{\" o}, P.; Latvala, V.:
Minimizers of the variable exponent, non-uniformly convex Dirichlet energy.
\emph{J.~Math.~Pures Appl.~(9)} \textbf{89}:2 (2008), 174--197.

\bibitem{HarHLN10} Harjulehto, P.; H{\" a}st{\" o}, P.; L{\^ e}, U.V.; Nuortio, M.:
Overview of differential equations with non-standard growth.
\emph{Nonlinear Anal.}~\textbf{72}:12 (2010), 4551--4574.

\bibitem{HarKT07} Harjulehto, P.; Kinnunen, J.; Tuhkanen, K.:
H{\" o}lder quasicontinuity in variable exponent Sobolev spaces.
\emph{J.~Inequal.~Appl.}~\textbf{vol.~2007}, art.~ID 32324 (2007), 18 pp.
Doi:10.1155/2007/32324.

\bibitem{HarL08} Harjulehto, P.; Latvala, V.:
Fine topology of variable exponent energy superminimizers.
\emph{Ann.~Acad.~Sci.~Fenn.~Math.}~\textbf{33}:2 (2008), 491--510.

\bibitem{KinKST08} Kinnunen, J.; Korte, R.; Shanmugalingam, N.; Tuominen, H.:
Lebesgue points and capacities via boxing inequality in metric spaces.
\emph{Indiana Univ.~Math.~J.}~\textbf{57} (2008), 401--430.

%
%

\bibitem{KovR91} Kov{\' a}{\v c}ik, O.; R{\' a}kosn{\' \i}k, J.:
On spaces $L^{p(x)}$ and $W^{k,p(x)}$.
\emph{Czechoslovak Math.~J.}~\textbf{41(116)}:4 (1991), 592--618.

\bibitem{Ler05} Lerner, A.K.:
Some remarks on the Hardy-Littlewood maximal function on variable $L^p$ spaces.
\emph{Math.~Z.}~\textbf{251}:3 (2005), 509--521.

\bibitem{Mus83} Musielak, J.:
\emph{Orlicz spaces and modular spaces.}
Lecture Notes in Mathematics series 1034.
Springer-Verlag, Berlin, 1983. 

%
%

\bibitem{Sam09} Samko, S.:
On some classical operators of variable order in variable exponent spaces.
In: Analysis, Partial Differential Equations and Applications. Eds.~Cialdea, A.; Lanzara, F.; Ricci P.E.
Operator Theory: Advances and Applications series 193. Birkh{\" a}user, Basel, 2009. Pp.~281--301.

\bibitem{Sar79} {\v S}arapudinov, I.I.:
The topology of the space ${\mathcal L}^{p(t)}([0,\,1])$. (English translation.)
\emph{Math.~Notes} \textbf{26}:3--4 (1979), 796--806.
Doi: 10.1007/BF01159546.

\bibitem{Sha00} Shanmugalingam, N.:
Newtonian spaces: an extension of Sobolev spaces to
metric measure spaces.
\emph{Rev.~Mat.~Iberoamericana} \textbf{16}:2 (2000), 243--279.

\bibitem{Tse61} Tsenov, I.V.:
Generalization of the problem of best approximation of a function
in the space $\mathcal{L}^s$.
\emph{Uch.~Zap.~Dagestan Gos.~Univ.}~\textbf{7} (1961), 25--37.

\bibitem{Zhi87} Zhikov, V.V.:
Averaging of functionals of the calculus of variations
and elasticity theory. (English translation.)
\emph{Math.~USSR-Izv.}~\textbf{29}:1 (1987), 33--66. 

\bibitem{Zhi97} Zhikov, V.V.:
On some variational problems.
\emph{Russ.~J.~Math.~Phys.}~\textbf{5}:1 (1997), 105--116

\bibitem{Zie89} Ziemer, W.P.:
\emph{Weakly differentiable functions. Sobolev spaces and functions of bounded variation.}
Graduate Texts in Mathematics series 120.
Springer-Verlag, New York, 1989.

\end{thebibliography}
\end{document}